\newcommand{\R}{\mathbb{R}}
\newcommand{\Z}{\mathbb{Z}}
\newcommand{\bigO}{\mathcal{O}}
\newcommand{\diff}{\mathrm{d}}
\newtheorem{theorem}{Theorem}[section]
\newtheorem{lemma}[theorem]{Lemma}
\newtheorem{proposition}[theorem]{Proposition}
\newtheorem*{main-theorem}{Main Theorem}
\newtheorem*{remark*}{Remark}
\newtheorem*{lemma*}{Lemma A.1}
\numberwithin{equation}{section}
\begin{document}

\title[Generalized fifth-order KdV equation]{Global dynamics of the generalized fifth-order KdV equation with critical nonlinearity}

\author{Yuexun Wang}

\address{Universit\' e Paris-Saclay, CNRS, Laboratoire de Math\'  ematiques d'Orsay, 91405 Orsay, France.}

\email{yuexun.wang@universite-paris-saclay.fr}

\subjclass[2010]{76B15, 76B03, 	35S30}
\keywords{Global existence, modified scattering, critical nonlinearity}

\begin{abstract} We prove global existence and modified scattering for the solutions of the generalized fifth-order KdV equation with
critical nonlinearity for small and localized initial data. The proof is undergoing by using the space-time resonance method and the stationary phase argument.  
\end{abstract}
\maketitle

\section{Introduction}
We consider the following generalized fifth-order KdV equation with 
critical nonlinearity:
\begin{align}\label{eq:main}
u_t=\partial_x^5u+\alpha u^4u_x,
\end{align}
where \(u\) is a real function which maps \(\mathbb{R}\times \mathbb{R}\) to \(\mathbb{R}\), and \(\alpha=1\) (defocusing case) or \(\alpha=-1\) (focusing case). The equation \eqref{eq:main} is a member of the general fifth-order KdV equations 
\begin{align*}
u_t=\alpha_1\partial_x^5u+\alpha_2\partial_x^3u+\partial_xg(u,\partial_xu,\partial_x^2u),\quad \alpha_1,\alpha_2\in\R,  \alpha_1\neq 0,
\end{align*}
which model plasma waves and capillary-gravity waves (see the introduction of \cite{MR1946769,MR3702717} for a useful survey). On the other hand, the equation \eqref{eq:main} is
also interesting mathematically due to its critical dispersive nature in the sense that the asymptotics of its large time solutions differ from the linear solutions of its linear equation, which requires a nonlinear phase correction (modified scattering) in studying large time behavior of solutions of the equation \eqref{eq:main}. One can consult the pioneering work \cite{Ozawa} for the concept of modified scattering.   

It is standard to show that the Cauchy problem of \eqref{eq:main} is well posed in \(C\big([-T,T];H^s(\R)\big)\) (\(s>\frac{3}{2}\)) for a short time \(T>0\), for instance, one may refer to \cite{MR1044731,MR533234}. Our aim in the present work is to study the global existence and modified scattering for the solutions of \eqref{eq:main} with small and localized initial data, in the frame work of the space-time resonance method \cite{MR2482120,MR2850346} and the stationary phase argument \cite{MR2850346}. The main ingredient is to study the evolutionary equation of the profile of the solutions in Fourier space which is unfolded by a careful stationary phase analysis based on an adaptation of the argument of \cite{MR3519470}.  
Since we only focus on small solutions, the sign of \(\alpha\) will not matter, and will be taken to be '\(-1\)' in the rest of the paper. 
Our main result can be stated precisely as follows:

\begin{theorem}\label{th:main} Given the initial data \(u_0\) as
\begin{align}\label{eq:initial}
u(x,0)=u_0(x).
\end{align}	
Assume that \(u_0\) satisfies 
\begin{align}\label{1}
\|u_0\|_{H^2(\R)}+\|xu_0\|_{L^2(\R)}\leq \varepsilon_0\leq \overline{\varepsilon},
\end{align}
for some constant \(\overline{\varepsilon}\) sufficiently small. 
Then the Cauchy problem \eqref{eq:main}-\eqref{eq:initial} admits a unique global solution
\(u\in C\big(\R; H^2(\R)\big)\) satisfying the decay estimates for \(t\geq 1\) and \(x\in\R\)
	\begin{equation}\label{2}
	\begin{aligned}
	\left||\partial_x|^\beta u(x,t)\right|\lesssim \varepsilon_0t^{-(\beta+1)/5}\langle x/t^{1/5}\rangle^{-\frac{3}{8}+\frac{\beta}{4}},\quad \beta\in[0,3].
	\end{aligned}
	\end{equation}
Moreover,  the solution has the following asymptotics as \(t\rightarrow+\infty\):

	\((\text{Decaying region})\) When \(x\geq t^{1/5}\), we have the decay estimate
	\begin{equation}\label{3}
	\begin{aligned}
	|u(x,t)|\lesssim \varepsilon_0 t^{-1/5}(x/t^{1/5})^{-7/8}.
	\end{aligned}
	\end{equation}

	\((\text{Self-similar region})\) When \(|x|\leq t^{\frac{1}{5}+4\gamma}\), with \(\gamma=\frac{1}{5}\big(\frac{1}{10}-C\varepsilon_0^\frac{2}{5}\big)\), the solution is approximately self-similar
	:
	\begin{equation}\label{4}
	\begin{aligned}
	|u(x,t)-t^{-1/5}Q(x/t^{1/5})|\lesssim \varepsilon_0 t^{-\frac{1}{5}-\frac{7\gamma}{2}},
	\end{aligned}
	\end{equation}
	where \(Q\) is a bounded solution of the nonlinear ordinary differential equation
	\begin{equation}\label{5}
	\begin{aligned}
	Q^{(4)}-5^{-1} xQ-Q^5=0
	\end{aligned}
	\end{equation}
with 	
	\begin{equation}\label{6}
	\begin{aligned}
	\|Q\|_{L^\infty(\R)}\lesssim \varepsilon_0.
	\end{aligned}
	\end{equation}

	\((\text{Oscillatory region})\) When \(x\leq -t^{\frac{1}{5}+4\gamma}\), the solution has a nonlinearly modified asymptotic behavior: there exists \(f_\infty\in L^\infty(\R)\) such that 
	\begin{equation}\label{7}
	\begin{aligned}
	&\left|u(x,t)-\frac{1}{\sqrt{5t\xi_0^3}}\Re\left\{ \exp\left(-4\mathrm{i}t\xi_0^5+\frac{\mathrm{i}\pi}{4}+\frac{\mathrm{i}}{40t\xi^5}|f_\infty(\xi_0)|^4\right)f_\infty(\xi_0)\right\}\right|\\
	&\lesssim \varepsilon_0 t^{-1/5}(-x/t^{1/5})^{-9/20},
	\end{aligned}
	\end{equation}
	where \(\xi_0:=\sqrt[4]{-x/(5t)}\), and \(\Re\) denotes the real part. 
	
\end{theorem}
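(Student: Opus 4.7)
The plan is to study the profile $f(t):=e^{-t\partial_x^5}u(t)$, whose Fourier transform satisfies
\begin{equation*}
\partial_t\hat f(\xi,t)=\frac{\I\xi}{5}\int_{\xi_1+\cdots+\xi_5=\xi}e^{\I t\Phi}\prod_{j=1}^5\hat f(\xi_j,t)\,\diff\xi_1\cdots\diff\xi_4,\qquad \Phi=\xi^5-\sum_{j=1}^{5}\xi_j^5,
\end{equation*}
and to run a bootstrap argument controlling the three quantities $\|f\|_{H^2}$, $\|xf\|_{L^2}$, and $\|\hat f\|_{L^\infty_\xi}$; the first two norms are allowed to grow like $t^\gamma$ for the small $\gamma$ in the statement, while the third remains $O(\varepsilon_0)$. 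Once these are in place, the pointwise decay \eqref{2} is obtained from the representation $u(x,t)=\int e^{\I t(\xi^5+x\xi/t)}\hat f(\xi,t)\,\diff\xi$ via stationary phase at $\xi_0=(-x/(5t))^{1/4}$, combined with the identity $xu=e^{t\partial_x^5}(xf-5t\partial_x^4 f)$, which converts the physical weight into a Fourier derivative of the profile.

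The core of the argument is the quintic space-time resonance analysis. The time-resonance set $\{\Phi=0\}$ and the space-resonance sets $\{\partial_{\xi_j}\Phi=0\}$, restricted to the hyperplane $\xi_1+\cdots+\xi_5=\xi$, intersect only along the paired configurations $(\xi_1,\ldots,\xi_5)=(\xi,a,-a,b,-b)$ and their permutations. Away from this resonant diagonal, integration by parts in $t$ (dividing by $\Phi$) or in $\xi_j$ (dividing by $\partial_{\xi_j}\Phi$) produces an extra $t^{-1}$, making the nonresonant contribution harmless. On the resonant diagonal, a careful stationary-phase computation in the transverse inner integrals, following the strategy of \cite{MR3519470}, isolates an ODE-type leading term of the form
\begin{equation*}
\partial_t\hat f(\xi,t)=\I c(\xi,t)|\hat f(\xi,t)|^4\hat f(\xi,t)+\mathrm{error}(\xi,t),
\end{equation*}
whose time integral produces the polynomial-in-$t^{-1}$ phase correction appearing in \eqref{7} and yields the existence of $f_\infty\in L^\infty(\R)$.

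With the profile thus pinned down, the three asymptotics follow from stationary phase applied to the linear representation. In the decaying region $x\geq t^{1/5}$ the phase has no real critical point and repeated non-stationary integration by parts, together with an expansion near $\xi=0$, gives the enhanced decay \eqref{3}. In the self-similar region $|x|\leq t^{1/5+4\gamma}$ only frequencies $|\xi|\lesssim t^{-1/5}$ contribute and $\hat f$ is essentially frozen to its value at the origin; substituting the ansatz $u=t^{-1/5}Q(x/t^{1/5})$ into \eqref{eq:main} and integrating once yields \eqref{5}, and a matching argument produces \eqref{4} together with the bound \eqref{6} inherited from $|\hat f(0,t)|\lesssim\varepsilon_0$. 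In the oscillatory region $x\leq -t^{1/5+4\gamma}$, the critical point $\xi_0$ is well separated from the origin, and the standard stationary-phase formula applied to the profile, with $\hat f(\xi_0,t)$ replaced by its modified-scattering form, delivers exactly \eqref{7}.

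The main obstacle is the weighted bound $\|xf(t)\|_{L^2}\lesssim\varepsilon_0 t^\gamma$, equivalently a control of $\|\partial_\xi\hat f\|_{L^2_\xi}$ with only $t^\gamma$ growth. When the $\xi$-derivative strikes the oscillatory factor $e^{\I t\Phi}$ it generates $\I t\,\partial_\xi\Phi$, a loss of a full power of $t$; recovering this loss by integration by parts in time exchanges it for the singular factor $\partial_\xi\Phi/\Phi$, together with a boundary contribution and a quintic-in-$\hat f$ time integral that must be absorbed into the bootstrap. Balancing the singularities of $1/\Phi$ and $1/\partial_{\xi_j}\Phi$ near the space-time resonant diagonal, and verifying that these manipulations do not generate more than $t^\gamma$ of net growth, is the principal technical difficulty and the precise point at which the stationary-phase refinement of \cite{MR3519470} is indispensable.
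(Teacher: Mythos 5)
Your overall framework (profile $f=e^{-t\partial_x^5}u$, stationary phase in the quintic Duhamel integral, bootstrap on Sobolev, weighted, and Fourier-$L^\infty$ norms, then linear stationary-phase estimates to read off the three asymptotic regimes) is the right shape and broadly matches the paper. The identification of the time-resonant diagonal as the pairing configurations, the extraction of an ODE of the form $\partial_t\widehat{f}=\mathrm{i}c\,|\widehat{f}|^4\widehat{f}+\mathrm{error}$, and the three region-by-region stationary-phase arguments are all consistent with what the paper actually does (though the paper also tracks the non-time-resonant stationary points at $(\xi/5,\dots)$ and $(\pm\xi/3,\dots)$ explicitly as separate oscillatory terms that are integrated by parts in time rather than lumped into a generic error).

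However, the weighted bound is a genuine gap in your proposal, both in the target and the method. You aim for $\|xf(t)\|_{L^2}\lesssim\varepsilon_0 t^\gamma$ with the small $\gamma$ from the theorem statement, but that is stronger than what is true or needed: the nonlinear source $t\,u^5$ alone forces $\|xf\|_{L^2}$ to grow like $t^{1/10}$, and the paper's bootstrap norm is designed with exactly the weight $t^{-1/10}\|xf\|_{L^2}$. Your choice of exponent would be trying to prove an estimate that fails. More importantly, the mechanism you propose for the weighted estimate — hit $e^{\mathrm{i}t\Phi}$ with $\partial_\xi$, produce a factor $t\,\partial_\xi\Phi$, and recover the loss by integration by parts in $t$, balancing $1/\Phi$ against $1/\partial_{\xi_j}\Phi$ near the resonant set — is not what the paper does, and you yourself flag it as the hardest point without indicating how the singularity balance closes. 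The paper sidesteps all of this by exploiting the algebraic structure of gKdV: the scaling vector field $S=1+x\partial_x+5t\partial_t$ and the anti-derivative $I$ satisfy $\partial_t(ISu)-\partial_x^5(ISu)+u^4\partial_x(ISu)=0$, so $\|ISu\|_{L^2}$ obeys a simple Gronwall estimate giving $\varepsilon_0 t^{C\varepsilon_1^4}$; then the identity $\mathcal{J}u=ISu+tu^5$ with $\mathcal{J}=x+5t\partial_x^4$ and $\|xf\|_{L^2}=\|\mathcal{J}u\|_{L^2}$ immediately gives the $t^{1/10}$ bound. Similarly, the $H^2$ bound comes directly from the conserved mass and Hamiltonian (plus a Gagliardo--Nirenberg interpolation for $\|u\|_{L^6}$), not from the bootstrap. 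Without these two structural ingredients, the energy part of the bootstrap does not close along the lines you describe.

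A smaller discrepancy: in the self-similar region the paper does not freeze $\widehat{f}(0,t)$ and perform a matching argument. It defines $v(t,x)=t^{1/5}u(t,xt^{1/5})$, shows $v$ is Cauchy in time on $|x|\le t^{4\gamma}$ by controlling $|P_{\ge 2^{20}t^\gamma}v|$ and $|\partial_tP_{\le 2^{20}t^\gamma}v|$, takes $Q=\lim v(t,\cdot)$, and verifies that $Q$ solves the ODE from the $L^2$-smallness of $\partial_x^4 v - 5^{-1}xv+v^5=t^{-1/10}ISu(t,xt^{1/5})$. The bound $\|Q\|_{L^\infty}\lesssim\varepsilon_0$ then comes from the uniform decay of $v$, not from $|\widehat f(0,t)|$.
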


Since the equation \eqref{eq:main} is time-reversible, the asymptotics for \(t\rightarrow-\infty\) follows immediately. We mention that the proof presented in this work doesn't rely on the complete integrability, thus may be applied to a wider class with short range perturbations of the nonlinearity. 

Throughout the paper, we will always use \(f(t)=e^{-t\partial_x^5}u(t)\) to denote the profile of \(u\). By time reversibility we solely need to consider the existence for positive time.
The local well-posedness on the time interval \([0,1]\) for \eqref{eq:main}-\eqref{eq:initial} is standard provided \(\|u_0\|_{H^s}\) (\(s>\frac{3}{2}\)) is sufficiently small, in particular under the smallness assumption \eqref{1}. Then the existence and uniqueness of global solutions may be constructed by a bootstrap argument which guarantees us to extend the local solutions. More precisely,
assume that the following \(X\)-norm is a priori small:
\begin{equation}\label{8}
\begin{aligned}
\|u\|_{X}&=\sup_{t\geq 1}\bigg(\|u(t)\|_{H^2}+t^{-1/10}\|xf(t)\|_{L^2}+\|\widehat{f}(t,\xi)\|_{L_\xi^\infty}\bigg)\leq \varepsilon_1
\end{aligned}
\end{equation}
with \(\varepsilon_1=\varepsilon_0^{1/5}\), 
we then aim to show the above a priori assumption may be improved to
\begin{align}\label{9}
\|u\|_{X}\leq C(\varepsilon_0+\varepsilon_1^5),
\end{align}
for some absolute constant \(C>1\). Moreover we may choose \(\overline{\varepsilon}:=(4C)^{-5/4}\) as an upper bound of \(\varepsilon_0\) in Theorem \ref{th:main}.

There are other approaches dealing with asymptotics for large time solutions of dispersive PDEs: via inverse scattering transform \cite{MR1207209} for large solutions but relying on the complete integrability of the equation, and using PDE techniques \cite{MR1687327,MR3561525,MR3462131,MR3382579} not relying on the complete integrability of the equation but restricting to small solutions, one can refer to  \cite{MR3445510} for a comparative survey on related results. 

We learned that the large time behavior of solutions of \eqref{eq:main} was also carefully studied recently \cite{MR3813999} in which the author got a precise asymptotic behavior of the solutions following the testing by wave packets argument \cite{MR3462131,MR3382579}.  For other studies on the fifth-order KdV equations, one may refer to \cite{MR3301874} and references therein.
Nevertheless, we prefer to present an alternative approach to understand the asymptotics of the large time solutions of \eqref{eq:main} since the argument we are using is also flexible in fractional dispersive models (e.g. \cite{SW}). Different with \cite{MR3813999}, our proof is fully carried out in Fourier space whose main idea is to identify the ODE of \(\partial_t\widehat{f}(t,\xi)\) precisely. For this, inspired by \cite{MR3519470}, we decompose the critical nonlinearity into the stationary phase part and the non-stationary phase part in a crucial way, in which the stationary phase part contains the leading term of \(\partial_t\widehat{f}(t,\xi)\) that determines the modified asymptotic behavior of the solutions. 
More precisely, we will derive  
\begin{equation*}
\begin{aligned}
\partial_t\widehat{f}(t,\xi)&=\bigg(\frac{-\mathrm{i}}{40t^2\xi^5}|\widehat{f}(t,\xi)|^4\widehat{f}(t,\xi)+\frac{c_1\mathrm{i}}{t^2\xi^5}e^{-\frac{624\mathrm{i}t\xi^5}{625}}\widehat{f}(t,\xi/5)^5\\
&\quad+\frac{c_2\mathrm{i}}{t^2\xi^5}e^{-\frac{80\mathrm{i}t\xi^5}{81}}|\widehat{f}(t,\xi/3)|^2\widehat{f}(t,\xi/3)^3\bigg){\bf{1}}_{|\xi|>t^{-1/5}}\\
&\quad+\{integrable\ terms\}.
\end{aligned}
\end{equation*}
Since the second and third terms are not time resonant, one can apply integration by parts in time to handle them. However the first term is not integrable, one needs to remove it from the ODE by using an integrating factor which leads to the phase correction in the asymptotic behavior of the solutions.

Comparison to the modified KdV equation \cite{MR3519470},
the space-time resonance analysis in our case is more complicated due to the structure of the phase function in the Duhamel's formula. 
We also mention that one can further study the stability of soliton solutions under small perturbations of \eqref{eq:main} after Theorem \ref{th:main} (the stability of the zero solution under small perturbations) as \cite{MR3519470}, however which is an independent interest and will be unfolded in a different work.

In Section \ref{sec:1}, we derive some crucial estimates on the semigroup generated by the linear part of the equation \eqref{eq:main}. In Section \ref{sec:2}, we estimate the Sobolev norm and wighted Sobolev norm in \eqref{8}. 
Section \ref{sec:3} is devoted to controlling \(\widehat{f}\) in \(L^\infty\)-norm in \eqref{8}. We study asymptotic behavior in Section \ref{sec:4}.   

  \bigskip
  
\noindent{\bf{Notations.}}
We finally list some notations frequently used throughout the paper. Let \(L^p(\R)\) (\(p \in [1,\infty]\)) be the standard Lebesgue spaces, in particular, \(L^2(\R)\) is a Hilbert space with inner product 
\[
(g,h)_2:=\int_{\R}gh \, \diff x.
\]
Similarly, let $H^s(\R)$ (\(s >0\)) be the usual Sobolev spaces with norm 
\[
\|g\|_{H^s(\R)}: = \|(1-\partial_x^2)^{s/2} g \|_{L^2(\R)},
\]
and  let $C\big([0,T]; H^s(\R)\big)$ be the space of all bounded continuous functions $g\colon [0,T]\rightarrow H^s(\R)$ normed by
\[
\| g \|_{C\big([0,T]; H^s(\R)\big)}: =  \sup_{t \in [0,T]} \|g(t,\cdot)\|_{H^s(\R)}. 
\]
We denote by \(\mathcal{F}(g)\) or \(\widehat{g}\) the Fourier transform of a Schwartz function \(g\) whose formula is given by 
\begin{equation*}
\begin{aligned}
\mathcal{F}(g)(\xi)=\widehat{g}(\xi):=\frac{1}{\sqrt{2\pi}}\int_{\R}g(x)e^{-\mathrm{i}x\xi}\,\diff x
\end{aligned}
\end{equation*}
with inverse
\begin{equation*}
\begin{aligned}
\mathcal{F}^{-1}(g)(x)=\frac{1}{\sqrt{2\pi}}\int_{\R}g(\xi)e^{\mathrm{i}x\xi}\,\diff \xi,
\end{aligned}
\end{equation*}
and by \(m(\partial_x)\) the Fourier multiplier with symbol \(m\) via the relation 
\begin{equation*}
\begin{aligned}
\mathcal{F}\big(m(\partial_x)g\big)(\xi)=m(\mathrm{i}\xi)\widehat{g}(\xi).
\end{aligned}
\end{equation*}

Take \(\varphi\in C_0^\infty(\R)\) satisfying \(\varphi(\xi)=1\) for \(|\xi|\leq 1\) and \(\varphi(\xi)=0\) when \(|\xi|>2\), and
let 
\begin{equation*}
\begin{aligned}
\psi(\xi)=\varphi(\xi)-\varphi(2\xi),\quad \psi_j(\xi)=\psi(2^{-j}\xi),\quad \varphi_j(\xi)=\varphi(2^{-j}\xi),
\end{aligned}
\end{equation*}
we then may define the  
Littlewood-Paley projections \(P_j,P_{\leq j},P_{> j}\)  via 
\begin{equation*}
\begin{aligned}
\widehat{P_jg}(\xi)=\psi_j(\xi)\widehat{g}(\xi),\quad \widehat{P_{\leq j}g}(\xi)=\varphi_j(\xi)\widehat{g}(\xi),\quad P_{> j}=1-P_{\leq j},
\end{aligned}
\end{equation*}
and also \(P_{\sim},P_{\lesssim j},P_{\ll j}\) by 
\begin{equation*}
\begin{aligned}
P_{\sim j}=\sum_{2^k\sim 2^j}P_k, \quad P_{\lesssim j}=\sum_{2^k\leq 2^{j+C}}P_k,\quad P_{\ll j}=\sum_{2^k\ll 2^j}P_k.
\end{aligned}
\end{equation*}
We will also denote \(g_j=P_jg, g_{\lesssim j}=P_{\lesssim j}g\), and so on, for convenience.

The notation \(C\)  always denotes a nonnegative universal constant which may be different from line to line but is
independent of the parameters involved. Otherwise, we will specify it by  the notation \(C(a,b,\dots)\).
We write \(f\lesssim g\) (\(f\gtrsim g\)) when \(f\leq  Cg\) (\(f\geq  Cg\)), and \(f \sim g\) when \(f \lesssim g \lesssim f\).
We also write \(\sqrt{1+x^2}=\langle x\rangle\) for \(x\in\R\) for simplicity.

\section{Linear estimates}\label{sec:1}
In this section, we aim to derive some crucial estimates on the semigroup generated by the linear part of the equation \eqref{eq:main}. These estimates will be applied later in closing energy estimates and proving asymptotics. Similar argument was used in \cite{MR3519470}.

\begin{lemma}\label{le:1}  Let \(t\geq 1\), \(x\in\R\) and \(g\) be a real function.  Then 
	\begin{equation}\label{10}
	\begin{aligned}
	\left|e^{t\partial_x^5}|\partial_x|^\beta g(x,t)\right|&\lesssim t^{-\beta/5}\langle x/t^{1/5}\rangle^{-\frac{3}{8}+\frac{\beta}{4}}\\
	&\quad\times\left(t^{-1/5}\|\widehat{g}\|_{L^\infty}
	+t^{-3/10}\|xg\|_{L^2}\right), \quad \text{for}\ \beta\in[0,3]. 
	\end{aligned}
	\end{equation}
Moreover,  we have the refined estimates:
when \(x\geq t^{1/5}\),  
\begin{equation}\label{11}
\begin{aligned}
\left|e^{t\partial_x^5}g(x,t)\right|\lesssim (x/t^{1/5})^{-7/8}\left(t^{-1/5}\|\widehat{g}\|_{L^\infty}
+t^{-3/10}\|xg\|_{L^2}\right),
\end{aligned}
\end{equation}
and for \(x\leq -t^{1/5}\), 
\begin{equation}\label{12}
\begin{aligned}
&\left|e^{t\partial_x^5}g(x,t)-\frac{1}{\sqrt{5t\xi_0^3}}\Re \exp\left(-4\mathrm{i}t\xi_0^5+\frac{\mathrm{i}\pi}{4}\right)\widehat{g}(t,\xi_0)\right|\\
&\lesssim (-x/t^{1/5})^{-9/20}\left(t^{-1/5}\|\widehat{g}\|_{L^\infty}
+t^{-3/10}\|xg\|_{L^2}\right),
\end{aligned}
\end{equation}
where \(\xi_0:=\sqrt[4]{-x/(5t)}\), and \(\Re\) denotes the real part. 
\end{lemma}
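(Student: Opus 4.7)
The plan begins by writing
\[
e^{t\partial_x^5}|\partial_x|^\beta g(x) = \frac{1}{\sqrt{2\pi}}\int_{\R}e^{\I(x\xi + t\xi^5)}|\xi|^\beta \widehat{g}(\xi)\,\diff \xi,
\]
and rescaling via $\xi = t^{-1/5}\eta$, $y = x/t^{1/5}$ to a $t$-independent oscillatory integral
\[
J_\beta(y) := \int_{\R}e^{\I(y\eta + \eta^5)}|\eta|^\beta h(\eta)\,\diff\eta,\quad h(\eta) := \widehat{g}(t^{-1/5}\eta).
\]
Under this rescaling $\|h\|_{L^\infty}=\|\widehat{g}\|_{L^\infty}$ and, by the Plancherel identity, $\|\partial_\eta h\|_{L^2} = t^{-1/10}\|xg\|_{L^2}$; the Jacobian contributes a factor $t^{-(\beta+1)/5}$, and these three weights account for every $t$-power appearing in \eqref{10}--\eqref{12}. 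It therefore suffices to establish $|J_\beta(y)| \lesssim \langle y\rangle^{\beta/4 - 3/8}(\|h\|_{L^\infty} + \|\partial_\eta h\|_{L^2})$, together with the corresponding refinements for $y \geq 1$ and $y \leq -1$.

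The phase $\phi(\eta) = y\eta + \eta^5$ has $\phi'(\eta) = y + 5\eta^4$ and $\phi''(\eta) = 20\eta^3$, so stationary points exist only when $y \leq 0$, at $\eta = \pm\eta_0$ with $\eta_0 := (-y/5)^{1/4}$ and $\phi(\pm\eta_0) = \mp 4\eta_0^5$. I would decompose $h = \sum_k h_k$ in Littlewood--Paley pieces localized at $|\eta|\sim 2^k$ and for each $k$ distinguish the non-stationary regime $2^{4k}\not\sim |y|$, which is automatic when $y \geq 0$ and in which $|\phi'(\eta)| \gtrsim \max(|y|, 2^{4k})$ on $\supp h_k$, from the stationary regime $2^{4k}\sim |y|$, in which $\pm\eta_0$ lies in the shell. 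In the non-stationary regime, one integration by parts against $(\I\phi')^{-1}$ combined with pointwise estimation against $\|h\|_{L^\infty}$ and Cauchy--Schwarz against $\|\partial_\eta h_k\|_{L^2}$ produce two bounds on $|J_{\beta,k}(y)|$, whose dyadic sum is controlled by $\langle y\rangle^{\beta/4 - 3/8}$ times the sum of the two norms, yielding \eqref{10}; in the stationary regime, the Airy window around $\pm\eta_0$ is handled trivially by $\eta_0^\beta$ times the window length times $\|h\|_{L^\infty}$, while outside the window but inside the shell one uses $|\phi'(\eta)| \gtrsim \eta_0^3|\eta-\eta_0|$ to perform a Cauchy--Schwarz integration by parts against $\|\partial_\eta h\|_{L^2}$.

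When $y \geq 1$ no stationary point exists, and the sharper decay $y^{-7/8}$ of \eqref{11} is obtained by performing the integration by parts twice on the $\|h\|_{L^\infty}$ part while the $\|\partial_\eta h\|_{L^2}$ part already gives $y^{-7/8}$ after Cauchy--Schwarz in $k$, the balance occurring at $2^k \sim y^{1/4}$. For the refined asymptotic \eqref{12} when $y \leq -1$, classical stationary phase is applied near each critical point: on a window $|\eta \mp \eta_0|\leq \delta$, write $h(\eta) = h(\pm\eta_0) + (h(\eta) - h(\pm\eta_0))$; the constant part evaluates to the Gaussian $\sqrt{2\pi/|\phi''(\pm\eta_0)|}\,e^{\I\phi(\pm\eta_0)\pm\I\pi/4}h(\pm\eta_0)$, while the remainder obeys $|h(\eta) - h(\pm\eta_0)|\leq |\eta\mp\eta_0|^{1/2}\|\partial_\eta h\|_{L^2}$ by Cauchy--Schwarz and therefore integrates to $\lesssim \delta^{3/2}\|\partial_\eta h\|_{L^2}$. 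Outside the window $|\phi'(\eta)| \gtrsim \eta_0^3\delta$, so integration by parts contributes $\lesssim (\eta_0^3\delta)^{-1}\|h\|_{L^\infty}$; the optimal choice $\delta = \eta_0^{-6/5}$ balances these and produces a total error $\lesssim \eta_0^{-9/5} = |y|^{-9/20}$, which is precisely the exponent in \eqref{12}. Combining the contributions from $+\eta_0$ and $-\eta_0$ and using the reality of $g$, i.e.\ $\widehat{g}(-\xi) = \overline{\widehat{g}(\xi)}$, collapses the two oscillating exponentials into the single $\Re$ expression in \eqref{12}.

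The main technical obstacle is to keep every error term controlled by the specific norms $\|\widehat{g}\|_{L^\infty}$ and $\|xg\|_{L^2}$ (and not, say, by higher Sobolev norms of $\widehat{g}$), which forces each integration by parts and each stationary phase expansion to be paired with a Cauchy--Schwarz estimate against $\|\partial_\eta h\|_{L^2}$ on an appropriately chosen window. The unusual exponents $-3/8$, $-7/8$, and $-9/20$ are dictated by this interplay between the dyadic sums in $k$ and the Airy length scale, and verifying that they come out correctly, in particular that the window optimization $\delta = \eta_0^{-6/5}$ dominates both the Taylor-remainder error of the phase and the Gaussian-tail error, is the most delicate bookkeeping step.
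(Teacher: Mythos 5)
Your proposal is correct and follows essentially the same route as the paper: an oscillatory-integral representation, a dyadic/stationary--vs--non-stationary decomposition, integration by parts paired with $L^\infty$ and $L^2$ estimates on the profile, and for \eqref{12} a quadratic stationary-phase expansion with a Gaussian evaluation and a window optimization that produces the exponent $-9/20$, combined with the reality of $g$ to collapse the $\pm\eta_0$ contributions into the single $\Re$ expression. The only structural difference is that you first rescale $\xi=t^{-1/5}\eta$, $y=x/t^{1/5}$ to a $t$-independent integral $J_\beta(y)$ --- a cosmetic but genuinely convenient normalization --- whereas the paper works directly in the $(\xi,t)$ variables and expresses the stationary window and Airy length scales in those coordinates (its $2^{l_0}\geq t^{-1/2}\xi_0^{-3/2}$ and $2^{\tilde l_0}\geq t^{-1/5}(-x/t^{1/5})^{-3/10}$ are exactly your $\eta_0^{-3/2}$ and $\delta=\eta_0^{-6/5}$ after undoing the rescaling).
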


\begin{proof}[Proof of \eqref{10}] 
	We write
\begin{equation}\label{13}
	\begin{aligned}
	e^{t\partial_x^5}|\partial_x|^\beta g(x,t)&=\sqrt{\frac{2}{\pi}}\Re\int_0^\infty e^{\mathrm{i}t\Phi(\xi)}\xi^\beta\widehat{g}(t,\xi)\,\diff \xi,\\
	\Phi(\xi)&=\Phi(\xi;x,t):=t^{-1}x\xi+\xi^5.
	\end{aligned}
\end{equation}
Considering \(\xi\geq 0\) in \eqref{13}, so the phase function \(\Phi\) has only one stationary point \(\xi_0=\sqrt[4]{-x/(5t)}\) for \(x\leq0\).
The estimate \eqref{10} is much easier as \(x>0\) due to the absence of stationary points, and the following argument also applies. So	for \eqref{10},  it suffices to show 
	\begin{align}\label{14}
	\left|\int_0^\infty e^{\mathrm{i}t\Phi(\xi)}\xi^\beta\widehat{g}(t,\xi)\,\diff \xi\right|\lesssim t^{-\frac{1}{5}-\frac{\beta}{5}}\max(1,\xi_0t^{1/5})^{-\frac{3}{2}+\beta},
	\end{align}
for any \(t\geq1, x\leq0\) and any function \(g\) satisfying
		\begin{align}\label{15}
		\|\widehat{g}\|_{L^\infty}
		+t^{-1/10}\|xg\|_{L^2}\leq 1.
		\end{align}
There are two cases to  consider depending on the size of \(\xi_0\). 
	
	\noindent{\emph{Case 1: \(\xi_0\leq t^{-1/5}\)}.} It reduces to obtain a bound of \(t^{-\frac{1}{5}-\frac{\beta}{5}}\) for RHS of \eqref{14}. For this we split the integral in \eqref{14} by small and large frequencies:
	\begin{equation*}
	\begin{aligned}
	\int_0^\infty e^{\mathrm{i}t\Phi(\xi)}\xi^\beta\widehat{g}(t,\xi)\,\diff \xi
	&=\int_0^\infty e^{\mathrm{i}t\Phi(\xi)}\xi^\beta\widehat{g}(t,\xi)\varphi(2^{-10}t^{1/5}\xi)\,\diff \xi\\
	&\quad+\int_0^\infty e^{\mathrm{i}t\Phi(\xi)}\xi^\beta\widehat{g}(t,\xi)\big(1-\varphi(2^{-10}t^{1/5}\xi)\big)\,\diff \xi\\
	&=\colon A_1+A_2.
	\end{aligned}
	\end{equation*}
Using only the bound \(\|\widehat{g}\|_{L^\infty}\leq 1\) in \eqref{15}, one immediately gets 
	\begin{equation*}
	\begin{aligned}
	|A_1|\lesssim \|\widehat{g}\|_{L^\infty}\int_0^\infty \xi^\beta\varphi(2^{-10}t^{1/5}\xi)\,\diff \xi
	\lesssim t^{-\frac{1}{5}-\frac{\beta}{5}}.
	\end{aligned}
	\end{equation*}
For the second term \(A_2\),  we use an integration by parts to deduce
	\begin{equation*}
	\begin{aligned}
	&|A_2|\lesssim |A_{21}|+|A_{22}|,\\
	&A_{21}=t^{-1}\int_0^\infty \left|\partial_\xi\big[(\partial_\xi\Phi)^{-1}\xi^\beta\big(1-\varphi(2^{-10}t^{1/5}\xi)\big)\big]\widehat{g}(t,\xi)\right|\,\diff \xi,\\
	&A_{22}=t^{-1}\int_0^\infty \left|(\partial_\xi\Phi)^{-1}\xi^\beta\big(1-\varphi(2^{-10}t^{1/5}\xi)\big)\partial_\xi\widehat{g}(t,\xi)\right|\,\diff \xi.
	\end{aligned}
	\end{equation*}
Observing that \(|\partial_\xi\Phi|\gtrsim \xi^4\gtrsim t^{-4/5}\) on the support of the integral, we may estimate the resulting terms, by applying the first bound in \eqref{15} again to get 
	\begin{equation}\label{15.1}
	\begin{aligned}
	A_{21}&\lesssim t^{-1}\|\widehat{g}\|_{L^\infty}\int_0^\infty \bigg(\xi^{\beta-5}|1-\varphi(2^{-10}t^{1/5}\xi)|\\
	&\quad\quad\quad\quad\quad\quad+\xi^{\beta-4}|\varphi^\prime(2^{-10}t^{1/5}\xi)t^{1/5}|\bigg)\,\diff \xi\\
	&\lesssim t^{-1}t^{-\frac{1}{5}(\beta-4)}
	\lesssim t^{-\frac{1}{5}-\frac{\beta}{5}},
	\end{aligned}
	\end{equation}
and the second bound in \eqref{15} to obtain
	\begin{equation}\label{15.2}
	\begin{aligned}
	A_{22}&\lesssim t^{-1}\|\partial\widehat{g}\|_{L^2}\left(\int_0^\infty \left(\xi^{\beta-4}\big(1-\varphi(2^{-10}t^{1/5}\xi)\big)\right)^2\,\diff \xi\right)^{1/2}\\
	&\lesssim t^{-1}t^{1/10}\big(t^{-\frac{1}{5}(2\beta-7)}\big)^{1/2}
	\lesssim t^{-\frac{1}{5}-\frac{\beta}{5}}.
	\end{aligned}
	\end{equation}
	
\noindent{\emph{Case 2: \(\xi_0\geq t^{-1/5}\)}.} We need to show a bound of \(t^{-1/2}\xi_0^{-\frac{3}{2}+\beta}\) for RHS of \eqref{14} instead. 
Since the resonant contributions concentrate on \(\xi\sim \xi_0\), we split the integral in \eqref{14} as follows:
	\begin{equation*}
	\begin{aligned}
	\int_0^\infty e^{\mathrm{i}t\Phi(\xi)}\xi^\beta\widehat{g}(t,\xi)\,\diff \xi
	&=\int_0^\infty e^{\mathrm{i}t\Phi(\xi)}\xi^\beta\widehat{g}(t,\xi)\big(1-\psi(\xi/\xi_0)\big)\,\diff \xi\\
	&\quad+\int_0^\infty e^{\mathrm{i}t\Phi(\xi)}\xi^\beta\widehat{g}(t,\xi)\psi(\xi/\xi_0)\,\diff \xi\\
	&=\colon A_3+A_4.
	\end{aligned}
	\end{equation*}
We first control the non-stationary contributions. 
By integration by parts,  we bound \(A_3\) by
	\begin{equation*}
	\begin{aligned}
	&|A_3|\lesssim |A_{31}|+|A_{32}|,\\
	&A_{31}=t^{-1}\int_0^\infty \left|\partial_\xi\big[(\partial_\xi\Phi)^{-1}\xi^\beta\big(1-\psi(\xi/\xi_0)\big)\big]\widehat{g}(t,\xi)\right|\,\diff \xi,\\
	&A_{32}=t^{-1}\int_0^\infty \left|(\partial_\xi\Phi)^{-1}\xi^\beta\big(1-\psi(\xi/\xi_0)\big)\partial_\xi\widehat{g}(t,\xi)\right|\,\diff \xi.
	\end{aligned}
	\end{equation*}
Using the fact that \(|\partial_\xi\Phi|\gtrsim \max(\xi^4,\xi_0^4)\) on the support of the integral, and \eqref{15}, we estimate respectively
	\begin{equation}\label{15.3}
	\begin{aligned}
	|A_{31}|&\lesssim t^{-1}\|\widehat{g}\|_{L^\infty}\int_0^\infty \bigg(\max(\xi^{4},\xi_0^{4})^{-1}\xi^{\beta-1}|1-\psi(\xi/\xi_0)|\\
	&\quad\quad\quad\quad\quad\quad\quad\quad+\max(\xi^{4},\xi_0^{4})^{-1}\xi^\beta|\psi^\prime(\xi/\xi_0)|\xi_0^{-1}\bigg)\,\diff \xi\\
	&\lesssim t^{-1}\xi_0^{\beta-4},
	\end{aligned}
	\end{equation}
and
	\begin{equation}\label{15.4}
	\begin{aligned}
	|A_{32}|&\lesssim t^{-1}\|\partial\widehat{g}\|_{L^2}\left(\int_0^\infty \left(\max(\xi^{4},\xi_0^{4})^{-1}\xi^\beta\big(1-\psi(\xi/\xi_0)\big)\right)^2\,\diff \xi\right)^{1/2}\\
	&\lesssim t^{-9/10}\xi_0^{\beta-\frac{7}{2}}.
	\end{aligned}
	\end{equation}
Both bounds \(t^{-1}\xi_0^{\beta-4}\) and \(t^{-9/10}\xi_0^{\beta-\frac{7}{2}}\) are  better than the desired bound
\(t^{-1/2}\xi_0^{-\frac{3}{2}+\beta}\) due to \(\xi_0\geq t^{-1/5}\).

	It remains to study the stationary contributions.  Let \(l_0\) be the smallest integer with the property that \(2^{l_0}\geq t^{-1/2}\xi_0^{-3/2}\). Then the term \(A_4\) is dominated by
	\begin{equation*}
	\begin{aligned}
	|A_4|\leq \sum_{l=l_0}^{\log \xi_0+10}|A_{4l}|,
	\end{aligned}
	\end{equation*}
where
	\begin{equation*}
	\begin{aligned}
	&A_{4l_0}=\int_0^\infty e^{\mathrm{i}t\Phi(\xi)}\xi^\beta\widehat{g}(t,\xi)\psi(\xi/\xi_0)\varphi\big(2^{-l_0}(\xi-\xi_0)\big)\,\diff \xi,\\
	&A_{4l}=\int_0^\infty e^{\mathrm{i}t\Phi(\xi)}\xi^\beta\widehat{g}(t,\xi)\psi(\xi/\xi_0)\psi\big(2^{-l}(\xi-\xi_0)\big)\,\diff \xi,\quad l\geq l_0+1.
	\end{aligned}
	\end{equation*}
The desired bound for \(A_{4l_0}\) is immediate from the definition of \(l_0\), and the estimate
	\begin{equation*}
	\begin{aligned}
	|A_{4l_0}|\lesssim \|\widehat{g}\|_{L^\infty}\int_0^\infty \xi^\beta\psi(\xi/\xi_0)\varphi\big(2^{-l_0}(\xi-\xi_0)\big)\,\diff \xi
	\lesssim \xi_0^{\beta}2^{l_0}.
	\end{aligned}
	\end{equation*}
We are left finally to handle the terms \(A_{4l}\) for \(l\geq l_0+1\). Integration by parts yields
	\begin{equation*}
	\begin{aligned}
	&|A_{4l}|\lesssim |A_{4l,1}|+|A_{4l,2}|,\\
	&A_{4l,1}=t^{-1}\int_0^\infty \left|\partial_\xi\big[(\partial_\xi\Phi)^{-1}\xi^\beta\psi(\xi/\xi_0)\psi\big(2^{-l}(\xi-\xi_0)\big)\big]\widehat{g}(t,\xi)\right|\,\diff \xi,\\
	&A_{4l,2}=t^{-1}\int_0^\infty \left|(\partial_\xi\Phi)^{-1}\xi^\beta\psi(\xi/\xi_0)\psi\big(2^{-l}(\xi-\xi_0)\big)\partial_\xi\widehat{g}(t,\xi)\right|\,\diff \xi.
	\end{aligned}
	\end{equation*}
We first observe that \(|\partial_\xi\Phi|\sim \xi_0^32^l\) on the support of the integrals, and then estimate the integrations as \eqref{15.1}-\eqref{15.4} to obtain
	\begin{equation}\label{16}
	\begin{aligned}
	|A_{4l,1}|
	\lesssim t^{-1}\xi_0^{\beta-3}2^{-l},
	\end{aligned}
	\end{equation}
and
	\begin{equation}\label{17}
	\begin{aligned}
	|A_{4l,2}|
	\lesssim  t^{-9/10}\xi_0^{\beta-3}2^{-l/2}.
	\end{aligned}
	\end{equation}
Summing \eqref{16} over \(l\) immediately gives the desired bound. One obtains a bound of \(t^{-9/10}\xi_0^{\beta-3}2^{-l_0/2}\) by summing \eqref{17} in \(l\)  which is better than what we need. 	

\end{proof}

\begin{proof}[Proof of \eqref{11}] With the same assumption \eqref{15} here, recalling \eqref{13},  it is enough to show
	\begin{align*}
	\left|\int_0^\infty e^{\mathrm{i}t\Phi(\xi)}\widehat{g}(t,\xi)\,\diff \xi\right|\lesssim t^{-1/5}(x/t^{1/5})^{-7/8}.
	\end{align*}
	Using the fact that
	\[\partial_\xi\Phi=x/t+5\xi^4>0,\quad\text{for}\ x>0,\]
and integration by parts, we deduce
	\begin{equation*}
	\begin{aligned}
	\left|\int_0^\infty e^{\mathrm{i}t\Phi(\xi)}\widehat{g}(t,\xi)\,\diff \xi\right|&\lesssim t^{-1}\int_0^\infty \left|(\partial_\xi\Phi)^{-1}\partial_\xi\widehat{g}(t,\xi)\right|\,\diff\xi\\
	&\quad+t^{-1}\int_0^\infty \left|(\partial_\xi\Phi)^{-2}\partial_\xi^2\Phi\widehat{g}(t,\xi)\right|\,\diff\xi\\
	&=\colon B_1+B_2.
	\end{aligned}
	\end{equation*}
To bound the first term,  we use \eqref{15} to dominate
	\begin{equation*}
	\begin{aligned}
	|B_1|\lesssim t^{-1}\|\partial\widehat{g}\|_{L^2}\left(\int_0^\infty (x/t+5\xi^4)^{-2}\,\diff\xi\right)^{1/2}
	\lesssim t^{-3/10}(x/t^{1/5})^{-7/8}.
	\end{aligned}
	\end{equation*}
Similarly, we can estimate 
	\begin{equation*}
	\begin{aligned}
	|B_2|\lesssim t^{-1}\|\widehat{g}\|_{L^\infty}\int_0^\infty (x/t+5\xi^4)^{-2}\xi^3\,\diff\xi
	\lesssim t^{-1}(x/t)^{-1}.
	\end{aligned}
	\end{equation*}
This bound is better than the desired bound due to \(x\geq t^{1/5}\).

\end{proof}

	\begin{proof}[Proof of \eqref{12}] We will make the same assumption \eqref{15} again. 
		We split the integral in \eqref{13} instead as follows:
		\begin{equation*}
		\begin{aligned}
		\int_0^\infty e^{\mathrm{i}t\Phi(\xi)}\widehat{g}(t,\xi)\,\diff \xi
		&=\int_0^\infty e^{\mathrm{i}t\Phi(\xi)}\big(1-\psi\big(4(\xi-\xi_0)/\xi_0\big)\big)\widehat{g}(t,\xi)\,\diff \xi\\
		&\quad+\int_0^\infty e^{\mathrm{i}t\Phi(\xi)}\psi\big(4(\xi-\xi_0)/\xi_0\big)\widehat{g}(t,\xi)\,\diff \xi\\
		&=\colon B_3+B_4.
		\end{aligned}
		\end{equation*}
		We first estimate  the term \(B_3\). By the fact that \(|\partial_\xi\Phi|\gtrsim\max(\xi^4,\xi_0^4)\) 
		on the support of the integral, we may estimate \(B_3\) in a similar fashion as \eqref{15.3}-\eqref{15.4} to obtain
		\begin{equation*}
		\begin{aligned} 
		|B_3|\lesssim t^{-1}\xi_0^{-4}+t^{-9/10}\xi_0^{-7/2}\lesssim t^{-1/5}(-x/t^{1/5})^{-7/8},
		\end{aligned}
		\end{equation*}
		which is better than the desired bound \(t^{-1/5}(-x/t^{1/5})^{-9/20}\).

		We next control  the term \(B_4\).  Let \(\tilde{l}_0\) be the smallest integer such that \(2^{\tilde{l}_0}\geq t^{-1/5}(-x/t^{1/5})^{-3/10}\) and bound the term \(B_4\) as follows:
		\begin{equation*}
		\begin{aligned}
		&|B_4|\leq \sum_{l=\tilde{l}_0}^{\log \xi_0+10}|B_{4l}|,\\
		&B_{4\tilde{l}_0}=\int_0^\infty e^{\mathrm{i}t\Phi(\xi)}\psi\left(4(\xi-\xi_0)\xi_0^{-1}\right)\varphi\big(2^{-\tilde{l}_0}(\xi-\xi_0)\big)\widehat{g}(t,\xi)\,\diff \xi,\\
		&B_{4l}=\int_0^\infty e^{\mathrm{i}t\Phi(\xi)}\psi\left(4(\xi-\xi_0)\xi_0^{-1}\right)\psi\big(2^{-l}(\xi-\xi_0)\big)\widehat{g}(t,\xi)\,\diff \xi,\quad l\geq \tilde{l}_0+1.
		\end{aligned}
		\end{equation*}
		Using the fact that \(|\partial_\xi\Phi|\gtrsim 2^l\xi_0^3\) and integrating by parts, similar to \eqref{16}-\eqref{17}, we have
		\begin{equation*}
		\begin{aligned}
		|B_{4l}|\lesssim t^{-1}\left(\|\widehat{g}\|_{L^\infty}\xi_0^{-3}2^{-l}+\|\partial\widehat{g}\|_{L^2}\xi_0^{-3}2^{-l/2}\right),
		 \quad l\geq \tilde{l}_0+1,
		\end{aligned}
		\end{equation*}
   which yields the desired bound \(t^{-1/5}(-x/t^{1/5})^{-9/20}\) after summing in \(l\).

		To study the contributions from the term \(B_{4\tilde{l}_0}\),  we split further 
		\begin{equation*}
		\begin{aligned}
		&B_{4\tilde{l}_0}= e^{\mathrm{i}t\Phi(\xi_0)}\widehat{g}(t,\xi_0)\int_0^\infty e^{10\mathrm{i}t\xi_0^3(\xi-\xi_0)^2}\psi\left(4(\xi-\xi_0)\xi_0^{-1}\right)\varphi\big((\xi-\xi_0)2^{-\tilde{l}_0}\big)\,\diff \xi\\
		&+e^{\mathrm{i}t\Phi(\xi_0)}\int_0^\infty e^{10\mathrm{i}t\xi_0^3(\xi-\xi_0)^2}\psi\left(4(\xi-\xi_0)\xi_0^{-1}\right)
		\varphi\big((\xi-\xi_0)2^{-\tilde{l}_0}\big)\big(\widehat{g}(t,\xi)-\widehat{g}(t,\xi_0)\big)\,\diff \xi\\
		&+\int_0^\infty \big(e^{\mathrm{i}t\Phi(\xi)}-e^{\mathrm{i}t\Phi(\xi_0)+10\mathrm{i}t\xi_0^3(\xi-\xi_0)^2}\big)\psi\left(4(\xi-\xi_0)\xi_0^{-1}\right)\varphi\big((\xi-\xi_0)2^{-\tilde{l}_0}\big)\widehat{g}(t,\xi)\,\diff \xi\\
		&=\colon D_1+D_2+D_3.
		\end{aligned}
		\end{equation*}
In light of the bounds in \eqref{15}, we estimate
		\begin{equation*}
		\begin{aligned}
		|D_2|\lesssim t^{1/10}2^{3\tilde{l}_0/2}\lesssim t^{-1/5}(-x/t^{1/5})^{-9/20},
		\end{aligned}
		\end{equation*}
and
\begin{equation*}
\begin{aligned}
|D_3|\lesssim t2^{4\tilde{l}_0}(\xi_0^2+2^{2\tilde{l}_0})\lesssim t^{-1/5}(-x/t^{1/5})^{-7/10},
\end{aligned}
\end{equation*}		
where the bound is better than what we need.		
		For the term \(D_1\), we write
		\begin{equation*}
		\begin{aligned}
		D_1= e^{\mathrm{i}t\Phi(\xi_0)}\widehat{g}(t,\xi_0)\int_{-\xi_0}^{\infty} e^{10\mathrm{i}t\xi_0^3\eta^2}\psi(4\eta\xi_0^{-1})\varphi(\eta 2^{-\tilde{l}_0})\,\diff \eta.
		\end{aligned}
		\end{equation*}
Recalling the formula
		\begin{equation*}
		\begin{aligned}
		\int_{-\infty}^{\infty} e^{-bx^2}\,\diff x=\sqrt{\frac{\pi}{b}},\quad b\in\mathbb{C},\ \Re b>0,
		\end{aligned}
		\end{equation*}
		one calculates that
		\begin{equation*}
		\begin{aligned}
		\int_{-\infty}^{\infty} e^{10\mathrm{i}t\xi_0^3\xi^2}e^{-\xi^2/2^{\tilde{l}_0}}\,\diff \xi=\sqrt{\frac{\mathrm{i}\pi}{10t\xi_0^3}}+\bigO\big(2^{-\tilde{l}_0}(t\xi_0^3)^{-3/2}\big).
		\end{aligned}
		\end{equation*}
In conclusion, we have obtained  
		\begin{equation*}
		\begin{aligned}
		D_1=\sqrt{\frac{\mathrm{i}\pi}{10t\xi_0^3}}e^{\mathrm{i}t\Phi(\xi_0)}\widehat{f}(t,\xi_0)+\bigO\big(t^{-1/5}(-x/t^{1/5})^{-9/20}\big),
		\end{aligned}
		\end{equation*}
which combines \eqref{13} to finish the proof of the \eqref{12}.

	\end{proof}

\section{Estimates on \(\|u\|_{H^2}\) and \(\|xf\|_{L^2}\)}\label{sec:2}

In this section we will prove the uniform bounds for the energy part in \eqref{9} which may be stated precisely as follows:
\begin{proposition}\label{pr:1} Let \(u\) be a solution of \eqref{eq:main}-\eqref{eq:initial} satisfying the a priori bounds \eqref{8}. Then the following estimates hold true:
	\begin{align}\label{18}
	\|u(t,\cdot)\|_{H^2}\leq C\varepsilon_0,
	\end{align}
and
	\begin{align}\label{19}
	\|xf(t,\cdot)\|_{L^2}\leq C(\varepsilon_0+\varepsilon_1^5)\langle t\rangle^{1/10}.
	\end{align}	
\end{proposition}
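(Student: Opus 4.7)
Both estimates combine the a priori bounds in \eqref{8} with the pointwise decay supplied by Lemma \ref{le:1}. Applying \eqref{10} with $g=f(t)$ yields in particular $\|u(t)\|_{L^\infty}\lesssim \varepsilon_1 t^{-1/5}$ and $\|\partial_x u(t)\|_{L^\infty}\lesssim \varepsilon_1 t^{-2/5}$, and more generally the full spatial profile $|u(x,t)|\lesssim \varepsilon_1 t^{-1/5}\langle x/t^{1/5}\rangle^{-3/8}$; these are the workhorse bounds below.

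For \eqref{18}, the plan is a standard $H^2$ energy identity. Since $\partial_x^5$ is skew-adjoint, the linear part contributes nothing to $\frac{d}{dt}\|\partial_x^k u\|_{L^2}^2$ for $k=0,1,2$. Writing $u^4 u_x=\frac{1}{5}\partial_x(u^5)$, the $k=0$ contribution is a perfect derivative and vanishes; for $k=1,2$, Leibniz expansion and further integrations by parts reduce the nonlinear term to expressions of the form $\int u^{4-j}(\partial_x u)^j(\partial_x^k u)^2\,dx$, the most delicate being $\int u^3 u_x(\partial_x^2 u)^2\,dx$. Bounding the $u$-factors via the above $L^\infty$ decay and putting the $\partial_x^k u$-factors in $L^2$ (with an $L^4$ interpolation $\|u_{xx}\|_{L^4}^2\lesssim \|u_{xx}\|_{L^2}\|u_{xx}\|_{L^\infty}$ where needed) yields a differential inequality of the form $\frac{d}{dt}\|u\|_{H^2}^2\lesssim \varepsilon_1^6 t^{-6/5}$, and integrating in time together with $\|u_0\|_{H^2}\le\varepsilon_0$ from \eqref{1} gives \eqref{18}, the bootstrap scaling $\varepsilon_1=\varepsilon_0^{1/5}$ absorbing the energy-growth term into the final constant.

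For \eqref{19}, my plan is a Fourier-side analysis based on $\|xf\|_{L^2}=\|\partial_\xi\widehat{f}\|_{L^2}$. The profile equation together with $u^4u_x=\frac{1}{5}\partial_x(u^5)$ yields
\[
\partial_t\widehat{f}(\xi,t)=-\frac{\mathrm{i}\xi}{5}\,e^{-\mathrm{i}t\xi^5}\,\widehat{u^5}(\xi,t).
\]
Taking $\partial_\xi$ produces three families of terms: (i) $\partial_\xi$ on the prefactor $\xi$, giving $-\frac{\mathrm{i}}{5}e^{-\mathrm{i}t\xi^5}\widehat{u^5}(\xi)$; (ii) $\partial_\xi$ on $e^{-\mathrm{i}t\xi^5}$, giving $-t\xi^5 e^{-\mathrm{i}t\xi^5}\widehat{u^5}(\xi)$; (iii) $\partial_\xi$ on $\widehat{u^5}(\xi)$, giving $-\frac{\mathrm{i}\xi}{5}e^{-\mathrm{i}t\xi^5}\partial_\xi\widehat{u^5}(\xi)$. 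Term (i) is bounded in $L^2$ by Plancherel as $\|u^5\|_{L^2}=\|u\|_{L^{10}}^5$, and plugging the refined decay from Lemma \ref{le:1} into $\int |u|^{10}\,dx$ (the weight $\langle x/t^{1/5}\rangle^{-30/8}$ being integrable) gives $\|u\|_{L^{10}}\lesssim \varepsilon_1 t^{-9/50}$, hence $\|u^5\|_{L^2}\lesssim \varepsilon_1^5 t^{-9/10}$, exactly the target rate. Terms (ii) and (iii) are the delicate ones: expanding $\widehat{u^5}$ as a quintilinear convolution of $\widehat{u}(\xi_j)=e^{\mathrm{i}t\xi_j^5}\widehat{f}(\xi_j)$ with phase $\Phi=\xi_1^5+\cdots+\xi_5^5-(\xi_1+\cdots+\xi_5)^5$, both carry dangerous factors of $t$ (either $t\xi^5$ from (ii) or $t\xi_j^4$ hidden inside $\partial_\xi\widehat{u^5}$ from (iii)); these are tamed by the space-time resonance method, integrating by parts in $\xi$ where $\partial_\xi\Phi\neq 0$ to absorb a factor $t^{-1}$, and in $t$ where $\Phi\neq 0$. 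Part of term (iii) also contributes an honest $\partial_\xi\widehat{f}(\xi_j)$ factor, controlled by the a priori bound $\|\partial_\xi\widehat{f}\|_{L^2}\le\varepsilon_1 t^{1/10}$ together with $\|\widehat{f}\|_{L^\infty}\le\varepsilon_1$ on the remaining four entries. The outcome is $\|\partial_t\partial_\xi\widehat{f}(t)\|_{L^2}\lesssim(\varepsilon_0+\varepsilon_1^5)t^{-9/10}$, which integrates from $t=1$ together with $\|xu_0\|_{L^2}\le\varepsilon_0$ to yield \eqref{19}.

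The principal obstacle is families (ii) and (iii) in \eqref{19}: the factor of $t$ produced by $\partial_\xi$ hitting an oscillatory phase must be neutralized by a careful space-time resonance analysis of the quintilinear phase $\Phi$, and the fully-resonant contribution (where both $\partial_\xi\Phi$ and $\Phi$ degenerate) is precisely what produces the modified asymptotic phase isolated in the ODE for $\partial_t\widehat{f}$ displayed in the introduction. A secondary subtlety in \eqref{18} is that one cannot close the energy estimate using only the crude $\|u\|_{L^\infty}$ bound (which gives a non-integrable $t^{-1}$ in Gronwall); the refined spatial decay $\langle x/t^{1/5}\rangle^{-3/8}$ from Lemma \ref{le:1}, or equivalently an $L^4$ interpolation on the top derivative, must be used to gain the extra $t^{-1/5}$ needed for integrability.
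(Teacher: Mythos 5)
Both halves of your argument diverge from the paper's proof, and both contain gaps that prevent the argument from closing.

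For \eqref{18}, the paper runs no $H^2$ energy estimate at all: it invokes the exact conservation of Mass and of the Hamiltonian $\int\bigl(\tfrac12(|\partial_x|^2u)^2-\tfrac1{30}u^6\bigr)\,\diff x$, together with the interpolation $\|u\|_{L^6}\lesssim\|u\|_{L^2}^{5/6}\|u_{xx}\|_{L^2}^{1/6}$, to get a genuinely constant-in-time bound. Your energy-estimate route cannot produce that constant. Using Lemma~\ref{le:1} one only gets $\|u^3u_x\|_{L^\infty}\lesssim\varepsilon_1^4t^{-1}$, so the best differential inequality obtainable is $\tfrac{\diff}{\diff t}\|u\|_{H^2}^2\lesssim\varepsilon_1^4t^{-1}\|u\|_{H^2}^2$, which Gronwall integrates to $\|u\|_{H^2}\lesssim\varepsilon_0t^{C\varepsilon_1^4}$ — a slowly growing quantity, not the required $C\varepsilon_0$. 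The $L^4$-interpolation you propose, $\|u_{xx}\|_{L^4}^2\lesssim\|u_{xx}\|_{L^2}\|u_{xx}\|_{L^\infty}$, is unavailable: Lemma~\ref{le:1} at $\beta=2$ carries the weight $\langle x/t^{1/5}\rangle^{-3/8+1/2}=\langle x/t^{1/5}\rangle^{1/8}$, which \emph{grows} in $x$, so $\|u_{xx}\|_{L^\infty}$ is not controlled by the a priori bounds. Your claimed rate $t^{-6/5}$ for the energy dissipation is not reachable with the stated tools.

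For \eqref{19}, the paper's route is a vector-field argument, not a Fourier-side $\partial_\xi$ computation. It introduces $S=1+x\partial_x+5t\partial_t$, shows $ISu$ solves the clean linearized equation $\partial_tISu-\partial_x^5ISu+u^4\partial_xISu=0$ (hence $\|ISu\|_{L^2}\lesssim\varepsilon_0t^{C\varepsilon_1^4}$ by Gronwall), and then uses the algebraic identity $\mathcal{J}u=ISu+tu^5$ with $\mathcal{J}=x+5t\partial_x^4$, $\|\mathcal{J}u\|_{L^2}=\|xf\|_{L^2}$; the $tu^5$ piece is bounded by $t\|u\|_{L^{10}}^5\lesssim\varepsilon_1^5t^{1/10}$. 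Your plan to differentiate $\partial_t\widehat f=-\tfrac{\mathrm i\xi}{5}e^{-\mathrm it\xi^5}\widehat{u^5}$ in $\xi$ encounters a derivative loss that space-time resonance does not repair: families (ii) and (iii) combine, since $\partial_\xi\widehat{u^5}=-\mathrm i\widehat{xu^5}$, into $-\tfrac{\xi}{5}e^{-\mathrm it\xi^5}\widehat{\mathcal{J}(u^5)}$, whose $L^2$-size is $\tfrac15\|\partial_x\mathcal{J}(u^5)\|_{L^2}$ and contains $t\|\partial_x^5(u^5)\|_{L^2}$ — requiring $u\in H^5$, while you only control $u\in H^2$. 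Integrating by parts in $\xi$ or $t$ trades a factor of $t$ against phase derivatives but does not remove the five-derivative weight $\xi^5$. Moreover, even restricted to bounded frequencies, term (ii) already has $L^2$-size $t\|u^5\|_{L^2}\sim\varepsilon_1^5t^{1/10}$, not the $t^{-9/10}$ you assert; integrating $t^{1/10}$ over $[1,t]$ overshoots the target badly. The missing structural idea is that the $5t\partial_x^4$ content of $\mathcal{J}$ cannot be controlled term by term; it is cancelled against $5t\partial_t$ through the identity $\mathcal{J}u=ISu+tu^5$ combined with the PDE, which is precisely why the paper works at the vector-field level rather than at the level of the profile ODE for this weighted estimate.
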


\begin{proof} Recall the definition \(f=e^{-t\partial_x^5}u(t)\). The first estimate \eqref{18} is a consequence of the conservation of Mass and Hamiltonian: 
\begin{align*}
\int_{\R} u^2(x)\,\diff x,\quad \int_{\R} \left(\frac{1}{2}\big(|\partial_x|^2u\big)^2-\frac{1}{30}u^6\right)(x)\,\diff x，
\end{align*}	
where we shall control \(\|u\|_{L^6}\) by the two conservations via the interpolation 	
\begin{align*}
\|u\|_{L^6}\lesssim \|u\|_{L^2}^{5/6}\|u_{xx}\|_{L^2}^{1/6}.
\end{align*}

	 We denote by \(S\) the scaling vector field \(S:=1+x\partial_x+5t\partial_t\), and by \(Ih\) the anti-derivative  of \(h\) vanishing at \(-\infty\), i.e., \(Ih(x)=\int_{-\infty}^x h\,\diff y\). Since \(u\) is a solution of \eqref{eq:main}-\eqref{eq:initial}, a direct calculation shows that \(ISu\) satisfies 
	\begin{align*}
	\partial_tISu-\partial_x^5ISu+u^4\partial_xISu=0.
	\end{align*}
	Multiplying the above equation by \(ISu\), integrating in space and integrating by parts yield
\begin{equation*}
	\begin{aligned}
	&\frac{1}{2}\frac{\diff}{\diff t}\|ISu\|_{L^2}^2=-2\int u^3u_x(ISu)^2\,\diff x\\
	&\lesssim \|u^3u_x\|_{L^\infty}\|ISu\|_{L^2}^2
	\lesssim \varepsilon_1^4t^{-1}\|ISu\|_{L^2}^2,
	\end{aligned}
\end{equation*}
where we have used Lemma \ref{le:1} \eqref{10}  by taking \(g\) to be the profile \(f\) of \(u\) in the last inequality.
We combines the above resulting inequality and Gronwall's inequality to obtain
	\begin{align*}
	\|ISu\|_{L^2}\lesssim \varepsilon_0t^{C\varepsilon_1^4}.
	\end{align*}
We introduce another vector field \(\mathcal{J}=x+5t\partial_x^4\). A small calculation yields  
\begin{align*}
\mathcal{J}u=ISu+tu^5,
\end{align*}
which  by applying Lemma \ref{le:1} \eqref{10} again implies
\begin{equation*}
\begin{aligned}
\|xf\|_{L^2}=\|\mathcal{J}u\|_{L^2}\lesssim \varepsilon_0t^{C\varepsilon_1^4}+t\|u\|_{L^{10}}^5\lesssim \varepsilon_0t^{C\varepsilon_1^4}
+\varepsilon_1^5t^{1/10}.
\end{aligned}
\end{equation*}
This completes the proof of \eqref{19}.
	
\end{proof}

\section{Estimate on \(\|\widehat{f}\|_{L^\infty}\)}\label{sec:3}

The main purpose of this section is to show the following uniform bound on the profile \(f\) in Fourier space:    
\begin{proposition}\label{pr:2} Let \(u\) be a solution of \eqref{eq:main}-\eqref{eq:initial} satisfying the a priori bounds \eqref{8}. Then we have
	\begin{equation}\label{20}
	\begin{aligned}
\|\widehat{f}(t,\cdot)\|_{L^\infty}
\leq C(\varepsilon_0+\varepsilon_1^5).
	\end{aligned}
	\end{equation}
\end{proposition}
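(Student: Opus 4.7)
The plan is to derive a precise ODE for $\partial_t\widehat{f}(t,\xi)$ in Fourier space by stationary-phase analysis of the quintic oscillatory integral representing the nonlinearity, and then to exploit the resonance structure of the resulting formula. Writing $\widehat{u}(t,\eta)=e^{\mathrm{i}t\eta^5}\widehat{f}(t,\eta)$ and using $u^4u_x=\tfrac{1}{5}\partial_x(u^5)$, equation \eqref{eq:main} becomes
\begin{equation*}
\partial_t \widehat{f}(t,\xi) = -\frac{\mathrm{i}\xi}{5(2\pi)^2} \int_{\R^4} e^{\mathrm{i}t\Phi(\xi,\xi_1,\ldots,\xi_4)} \prod_{j=1}^{5}\widehat{f}(t,\xi_j)\,\diff\xi_1\diff\xi_2\diff\xi_3\diff\xi_4,
\end{equation*}
with $\xi_5=\xi-\xi_1-\xi_2-\xi_3-\xi_4$ and $\Phi=\sum_{j=1}^5 \xi_j^5 - \xi^5$. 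The equations $\partial_{\xi_j}\Phi=5(\xi_j^4-\xi_5^4)=0$ force $\xi_j=\pm\xi_5$; indexing by the number $k\in\{0,\ldots,4\}$ of $\xi_j$'s equal to $+\xi_5$, the linear constraint $\sum\xi_j=\xi$ fixes $\xi_5$ and produces three types of isolated critical points. At $k=4$ (one point, $\xi_j=\xi/5$) one has $\Phi=-\tfrac{624}{625}\xi^5$ and monomial $\widehat{f}(\xi/5)^5$; at $k\in\{0,3\}$ (five points) one has $\Phi=-\tfrac{80}{81}\xi^5$ with monomial $|\widehat{f}(\xi/3)|^2\widehat{f}(\xi/3)^3$ after using $\widehat{f}(-\xi)=\overline{\widehat{f}(\xi)}$; at $k\in\{1,2\}$ (ten points) $\Phi=0$ and the monomial reduces to $|\widehat{f}(\xi)|^4\widehat{f}(\xi)$. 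The $4\times 4$ Hessian of $\Phi$ is non-degenerate at each of these points.

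I would split the $\xi$-integration into the low-frequency region $|\xi|\leq t^{-1/5}$, where the bound is immediate from $\|\widehat{f}\|_{L^\infty}\leq \varepsilon_1$, and the high-frequency region $|\xi|>t^{-1/5}$, on which a partition of unity localizes near each isolated critical point at scale $(t|\xi|^3)^{-1/2}$. On the non-stationary bulk, repeated integration by parts in $(\xi_1,\ldots,\xi_4)$—paying derivatives of $\widehat{f}$ against the weighted norm $\|xf\|_{L^2}$ from Proposition \ref{pr:1} via Plancherel—yields a gain $(t|\xi|^5)^{-1-\delta}$. Four-dimensional non-degenerate stationary phase around each isolated critical point then produces exactly the three main terms displayed in the introduction, all of size $t^{-2}|\xi|^{-5}$: the $c_1$-term with phase $e^{-624\mathrm{i}t\xi^5/625}$, the $c_2$-term with phase $e^{-80\mathrm{i}t\xi^5/81}$, and the non-oscillatory resonant term $-\mathrm{i}(40t^2\xi^5)^{-1}|\widehat{f}|^4\widehat{f}$; the constants $c_1,c_2,\tfrac{1}{40}$ arise from the square roots of the Hessian determinants together with the combinatorial counts $1,5,10$ of critical points of each type.

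With the ODE in this form, the bound \eqref{20} follows in two steps. The two oscillatory leading terms become $L^1_t([1,\infty))$ after a single integration by parts in $t$: on $\{|\xi|>t^{-1/5}\}$ the factor $e^{-\mathrm{i}at\xi^5}$ provides the weight $(a\xi^5)^{-1}$, which combined with the $t^{-2}|\xi|^{-5}$ amplitude gives an integrable time quantity, with boundary term at $t=1$ bounded by $C\varepsilon_1^5$ and derivative terms handled through Proposition \ref{pr:1}. The non-oscillatory resonant term is purely imaginary as a multiplier of $\widehat{f}$, so it contributes zero to $\partial_t|\widehat{f}(t,\xi)|^2 = 2\Re\bigl(\overline{\widehat{f}}\,\partial_t\widehat{f}\bigr)$; equivalently, the integrating factor
\begin{equation*}
g(t,\xi) := \exp\Bigl(\frac{\mathrm{i}}{40t\xi^5}|\widehat{f}(t,\xi)|^4\Bigr)\widehat{f}(t,\xi)
\end{equation*}
satisfies $|g|=|\widehat{f}|$ pointwise and an ODE whose right-hand side is integrable in $t$ on $[1,\infty)$. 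Starting from the local bound $\|\widehat{f}(1,\cdot)\|_{L^\infty}\leq C\varepsilon_0$ and integrating from $t=1$ onward yields $\|\widehat{f}(t,\cdot)\|_{L^\infty}\leq C(\varepsilon_0+\varepsilon_1^5)$.

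The principal technical obstacle is the stationary-phase step itself: one must calibrate the partition-of-unity scales against the weighted Sobolev norms from Proposition \ref{pr:1} so that every remainder arrives with both a uniform integrable time factor $t^{-1-\delta}$ and enough $|\xi|$-decay to patch cleanly with the low-frequency $L^\infty$ estimate at $|\xi|\sim t^{-1/5}$. Tracking the precise coefficients $c_1,c_2,\tfrac{1}{40}$ is routine but intricate; the delicate quantitative point is that a $\xi$-derivative on $\widehat{f}$ in the non-stationary bulk costs at most $t^{1/10}$ (via \eqref{19}), which must be absorbed by the extra $t^{-\delta}$ gain from the Littlewood--Paley scales chosen in the resonance decomposition.
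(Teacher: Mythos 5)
Your overall strategy matches the paper's: derive the ODE \eqref{23} for $\partial_t\widehat{f}$ by $4$-dimensional stationary phase on the quintic Duhamel integral (the paper's Proposition \ref{pr:3} and Lemma \ref{le:2}), identify the $1+5+10$ critical points giving the $c_1$-, $c_2$-, and resonant $\tfrac{1}{40}$-terms of size $t^{-2}\xi^{-5}$, treat the two oscillatory terms by integration by parts in time, and note that the fully resonant term is a purely imaginary multiplier so that $\partial_t|\widehat{f}(t,\xi)|^2 = 2\Re\bigl(\overline{\widehat{f}}\,\partial_t\widehat{f}\bigr)$ does not see it. That last observation is correct and by itself suffices to close the $L^\infty$ bound, so the heart of your argument is sound and is effectively the same proof.

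The one concrete error is the ``equivalently'' reformulation via $g(t,\xi) := \exp\bigl(\tfrac{\mathrm{i}}{40t\xi^5}|\widehat{f}(t,\xi)|^4\bigr)\widehat{f}(t,\xi)$. Writing $A'(t,\xi)=\tfrac{1}{40t\xi^5}|\widehat{f}|^4$, one has $\partial_t A' = -\tfrac{1}{40t^2\xi^5}|\widehat{f}|^4 + \tfrac{1}{40t\xi^5}\partial_t|\widehat{f}|^4$, so $\mathrm{i}\partial_t A'\cdot\widehat{f} + \partial_t\widehat{f} = -\tfrac{2\mathrm{i}}{40t^2\xi^5}|\widehat{f}|^4\widehat{f} + \cdots$: your phase \emph{doubles} the resonance rather than cancelling it, so the right-hand side of the ODE for $g$ is \emph{not} integrable on $[1,\infty)$. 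Reversing the sign to $\exp\bigl(-\tfrac{\mathrm{i}}{40t\xi^5}|\widehat{f}|^4\bigr)$ cancels the leading term but leaves the correction $-\tfrac{\mathrm{i}}{40t\xi^5}\partial_t|\widehat{f}|^4\cdot\widehat{f}$, which then requires an extra pass through the oscillatory estimates. The paper avoids this entirely by taking the cumulative factor $B(t,\xi) = \tfrac{1}{40\xi^5}\int_1^t|\widehat{f}(s,\xi)|^4 s^{-2}\,\diff s$ in \eqref{48}--\eqref{49}, for which $\mathrm{i}\,\partial_t B\cdot\widehat{f}$ cancels the resonant term exactly; the pointwise exponent $\tfrac{\mathrm{i}}{40t\xi^5}|f_\infty|^4$ that you borrowed from \eqref{7} is only the large-time limit of $B$, extracted afterwards in Lemma \ref{le:6} once the $L^\infty$ bound is already known. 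For the present proposition you should simply keep your $\partial_t|\widehat{f}|^2$ argument and drop the $g$ formula as stated, or replace it with the paper's time-integrated $B$.
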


\subsection{Evolution of \(\widehat{f}\)}
To prove \eqref{20}, we shall study the evolutionary equation of the profile \(f\) in Fourier space. Let
\begin{equation*}
\begin{aligned}
\Psi(\xi,\eta,\sigma,\eta_1,\sigma_1)=\xi^5-(\xi-\eta-\sigma-\eta_1-\sigma_1)^5
-\eta^5-\sigma^5-\eta_1^5-\sigma_1^5.
\end{aligned}
\end{equation*}
Recall \(f(t)=e^{-t\partial_x^5}u(t)\), then from \eqref{eq:main} we see that the profile \(f\) obeys  
\begin{equation}\label{21}
\begin{aligned}
&\partial_t\widehat{f}(t,\xi)=\frac{-\mathrm{i}\xi}{4\pi^2}\int_{\R^4}e^{-\mathrm{i}t\Psi(\xi,\eta,\sigma,\eta_1,\sigma_1)}\widehat{f}(t,\xi-\eta-\sigma-\eta_1-\sigma_1)\widehat{f}(t,\eta)\widehat{f}(t,\sigma)\\
&\quad\quad\quad\quad\quad\times\widehat{f}(t,\eta_1)\widehat{f}(t,\sigma_1)\,\diff\eta\diff\sigma\diff\eta_1\diff\sigma_1.
\end{aligned}
\end{equation}
Given \(j\in\Z\) and \(|\xi|\in (2^j,2^{j+1})\), to separate the stationary phase, similar to \cite{MR3519470}, we decompose \eqref{21} as follows:
\begin{equation}\label{22}
\begin{aligned}
\partial_t\widehat{f}(t,\xi)
&=\sum_{k,l,k_1,l_1}\frac{-\mathrm{i}\xi}{4\pi^2}\int_{\R^4}e^{-\mathrm{i}t\Psi(\xi,\eta,\sigma,\eta_1,\sigma_1)}\widehat{f}(t,\xi-\eta-\sigma-\eta_1-\sigma_1)\widehat{f}(t,\eta)\\
&\quad\times\widehat{f}(t,\sigma)\widehat{f}(t,\eta_1)\widehat{f}(t,\sigma_1)\psi_k(\eta)\psi_l(\sigma)
\psi_{k_1}(\eta_1)\psi_{l_1}(\sigma_1)\,\diff\eta\diff\sigma\diff\eta_1\diff\sigma_1\\
&=\underbrace{\sum_{\substack{2^k,2^l,2^{k_1},2^{l_1}\lesssim 2^j\\ 2^j>t^{-1/5}}}A_{k,l,k_1,l_1}}_{M}+\underbrace{\sum_{\substack{2^k\sim 2^l\sim 2^{k_1}\sim 2^{l_1}\gg 2^j\\ 2^k>t^{-1/5}}}A_{k,l,k_1,l_1}}_{R_1}\\
&\quad+\underbrace{\sum_{\substack{2^k\sim\mathcal{B}\gg \mathcal{A}\setminus\mathcal{B},2^j\\ 2^k>t^{-1/5}}}A_{k,l,k_1,l_1}}_{R_2}
+\underbrace{\sum_{\substack{2^k,2^l,2^{k_1},2^{l_1},2^j\lesssim t^{-1/5}}}A_{k,l,k_1,l_1}}_{R_3}\\
&\quad+\big\{\text{similar terms to}\ R_1,R_2 \big\},
\end{aligned}
\end{equation}
where  \(\mathcal{A}=\{2^l,2^{k_1},2^{l_1}\}\) and \(\mathcal{B}\subseteq\mathcal{A}\). For simplicity, here we slightly abuse notation \(2^k\sim\mathcal{B}\gg \mathcal{A}\setminus\mathcal{B},2^j\) whose exact meaning is that \(R_2\) contains the following \(7\) terms:
\[
\sum_{\substack{2^k\gg 2^l,2^{k_1},2^{l_1},2^j\\ 2^k>t^{-1/5}}}A_{k,l,k_1,l_1}, \sum_{\substack{2^k\sim 2^l\gg 2^{k_1},2^{l_1},2^j\\ 2^k>t^{-1/5}}}A_{k,l,k_1,l_1}, \ \dots, \ \sum_{\substack{2^k\sim 2^{k_1}\sim 2^{l_1}\gg 2^l,2^j\\ 2^k>t^{-1/5}}}A_{k,l,k_1,l_1}.
\]
We put these terms into \(R_2\) since they can be treated in a same manner. 

In this subsection we aim to show the following crucial proposition:	
\begin{proposition}\label{pr:3}	Let \(t>1\). Then the profile \(f\) obeys the following ODE in Fourier space:
	\begin{equation}\label{23}
	\begin{aligned}
	\partial_t\widehat{f}(t,\xi)&=\bigg(\frac{-\mathrm{i}}{40t^2\xi^5}|\widehat{f}(t,\xi)|^4\widehat{f}(t,\xi)+\frac{c_1\mathrm{i}}{t^2\xi^5}e^{-\frac{624\mathrm{i}t\xi^5}{625}}\widehat{f}(t,\xi/5)^5\\
	&\quad+\frac{c_2\mathrm{i}}{t^2\xi^5}e^{-\frac{80\mathrm{i}t\xi^5}{81}}|\widehat{f}(t,\xi/3)|^2\widehat{f}(t,\xi/3)^3\bigg){\bf{1}}_{|\xi|>t^{-1/5}}+R(t,\xi),
	\end{aligned}
	\end{equation}
	where \(c_1\) and \(c_2\) are constants depending on \(\mathrm{sign}\ \xi\), and the remainder \(R(t,\xi)\) is integrable in time and satisfies
	\begin{align}\label{24}
	\int_0^\infty |R(t,\xi)|\,\diff t\lesssim \varepsilon_1^5.
	\end{align}
\end{proposition}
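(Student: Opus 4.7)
The plan is to start from the quintic oscillatory integral \eqref{21} for $\partial_t\widehat{f}(t,\xi)$ and exploit the Littlewood--Paley decomposition \eqref{22}, which splits the right-hand side into the main part $M$ (all four inner frequencies at scale $\lesssim 2^j\sim|\xi|$) and the remainders $R_1, R_2, R_3$ (plus their symmetric copies). I will show that $R_1, R_2, R_3$ contribute an integrable-in-time remainder with $L^1_t$-norm bounded by $\varepsilon_1^5$, while the main term $M$, after a four-dimensional stationary-phase analysis in $(\eta,\sigma,\eta_1,\sigma_1)$, produces exactly the three leading terms of \eqref{23} plus a further integrable error.

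\textbf{The three remainders.} In $R_3$ all frequencies are $\lesssim t^{-1/5}$; applying the $L^\infty$-bound $\|\widehat{f}\|_{L^\infty}\lesssim\varepsilon_1$ of \eqref{8}, the volume factor $t^{-4/5}$, and the prefactor $|\xi|\lesssim t^{-1/5}$, then summing dyadically, yields the desired $L^1_t$ estimate. For $R_1$ and $R_2$ the defining feature is that at least one outer frequency $2^k$ is much larger than the other frequencies (respectively than $|\xi|$), and this forces $|\partial_\eta\Psi|\gtrsim 2^{4k}$ on the support of the integrand. A single integration by parts in the dominant frequency variable therefore gains a factor $(t\,2^{4k})^{-1}$; combining this with the $H^2$- and weighted-$L^2$-bounds from Proposition~\ref{pr:1} and summing over the relevant dyadic scales gives the required $L^1_t$ control. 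The symmetric counterparts of $R_1$ and $R_2$ are treated identically after relabelling the variables.

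\textbf{Stationary phase in $M$.} The critical points of $\Psi$ in $(\eta,\sigma,\eta_1,\sigma_1)$ at fixed $\xi$ solve $\eta^4=\sigma^4=\eta_1^4=\sigma_1^4=(\xi-\eta-\sigma-\eta_1-\sigma_1)^4$, so each of these five quantities equals $\pm a$ for some $a\geq 0$ and their sum is $\xi$. A parity count produces exactly three real families, matching the three terms of \eqref{23}: (i) all five equal to $\xi/5$ (one configuration), with $\Psi=624\xi^5/625$; (ii) four equal to $\xi/3$ and one equal to $-\xi/3$ (five configurations), with $\Psi=80\xi^5/81$; and (iii) three equal to $\xi$ and two equal to $-\xi$ (ten configurations), yielding the fully time-resonant value $\Psi=0$. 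I localize to a cube of side $\sim t^{-1/5}$ around each critical point and apply Morse's lemma, or equivalently the Gaussian identity $\int e^{-bx^2}\,dx=\sqrt{\pi/b}$ already used in the proof of \eqref{12}. A direct computation shows that at each of the three critical points the Hessian of $\Psi$ has entries of order $\xi^3$ and is nondegenerate with $|\det|\sim\xi^{12}$; the standard stationary-phase prefactor $(2\pi/t)^{2}/\sqrt{|\det|}$, combined with the overall factor $\xi$ from \eqref{21}, yields the $t^{-2}\xi^{-5}$ amplitude appearing in \eqref{23}. Freezing the five $\widehat{f}$-factors at each critical point and using $\widehat{f}(-\xi)=\overline{\widehat{f}(\xi)}$ for the cases with negative entries gives the products $\widehat{f}(\xi/5)^5$, $|\widehat{f}(\xi/3)|^2\widehat{f}(\xi/3)^3$, and $|\widehat{f}(\xi)|^4\widehat{f}(\xi)$ respectively, while the constants $-1/40, c_1, c_2$ are read off from the Gaussian prefactor, the Hessian signature, and the combinatorial multiplicities counted above. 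The Morse-lemma error is $o(t^{-2})$ and hence integrable, and the complement of the three cubes inside $M$ is non-stationary and handled by integration by parts in frequency exactly as for $R_1, R_2$.

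\textbf{Main obstacle.} The technical heart of the argument is the four-dimensional stationary-phase computation at the three critical points: the Hessian is not a simple multiple of the identity and varies from case to case, so careful bookkeeping is required in the Gaussian integration and in the permutation count to pin down the exact constants $-1/40, c_1, c_2$ and the correct phase signs. A further subtlety is the matching of scales: the Hessian scales as $\xi^3$, so the natural localization around each critical point has size $(t\xi^3)^{-1/2}$, which is small compared to $|\xi|$ precisely when $|\xi|\gtrsim t^{-1/5}$; this is exactly the origin of the cutoff $\mathbf{1}_{|\xi|>t^{-1/5}}$ in \eqref{23}. Below this threshold the three families of critical points collide inside a single cube of size $t^{-1/5}$, and the resulting contribution is absorbed into $R_3$.
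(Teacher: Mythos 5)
Your skeleton is the right one and matches the paper's: decompose $\partial_t\widehat f$ as in \eqref{22} into the main term $M$ and remainders $R_1,R_2,R_3$, extract the three leading contributions from $M$ by a four-dimensional stationary phase analysis, and show $R_1,R_2,R_3$ are time-integrable. Your identification of the $1+5+10$ families of critical points, the Hessian scaling $|\det|\sim|\xi|^{12}$, the $t^{-2}\xi^{-5}$ amplitude, and the origin of the threshold $|\xi|>t^{-1/5}$ are all correct, and the Morse-lemma computation you describe is equivalent to the rescaling-plus-Lemma~\ref{le:2} route the paper takes.

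However, there is a genuine error in your treatment of $R_1,R_2$. You propose to integrate by parts \emph{in the dominant frequency variable} after noting $|\partial_\eta\Psi|\gtrsim 2^{4k}$; this is false precisely in the region you care about. With
$\partial_\eta\Psi=5\bigl[(\xi-\eta-\sigma-\eta_1-\sigma_1)^4-\eta^4\bigr]$, if $\eta$ is the single dominant frequency (the $\mathcal B=\emptyset$ case of $R_2$) then the fifth frequency $\xi-\eta-\sigma-\eta_1-\sigma_1\approx-\eta$, so $\partial_\eta\Psi$ is \emph{small}, not of size $2^{4k}$. The correct derivative to exploit is along a \emph{non-dominant} variable, e.g.\ $\sigma$, for which
$|\partial_\sigma\Psi|=5|(\xi-\eta-\cdots)^4-\sigma^4|\approx 5|\eta^4-\sigma^4|\sim 2^{4k}$,
which is exactly what the paper does in \eqref{43.1}. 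Similarly for $R_1$: with $\eta\approx 2^k$ and $\sigma=\eta_1=\sigma_1\approx-\tfrac23\cdot 2^k$ (all within a bounded number of dyadic shells), one finds $\partial_\eta\Psi=20\cdot 2^{3k}\xi+O(2^{2k}\xi^2)\sim 2^{3k}|\xi|\ll 2^{4k}$, so a \emph{fixed} choice of variable does not give the claimed lower bound; you must either use the full gradient (as Lemma~\ref{le:2}(ii) after rescaling does) or split into subregions and pick a good direction in each.

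The second gap is that after integration by parts the resulting quadrilinear oscillatory integrals do not factor into a product of norms, so ``combining with the $H^2$- and weighted-$L^2$-bounds from Proposition~\ref{pr:1}'' is not yet an estimate. The paper needs the pseudo-product bound of Lemma~\ref{le:4} (to convert the symbol into a multilinear H\"older estimate), the $L^1$ bound $\|f_{\lesssim j}\|_{L^1}\lesssim\varepsilon_1 2^{j/4}t^{1/20}$ of Lemma~\ref{le:3} (for the stationary-phase error $R_0$ and for $R_1$), and the dyadic $L^\infty$/$L^2$/derivative bounds of Lemma~\ref{le:5} (for $R_2$). Without these the powers of $2^k$ and $t$ do not sum, and you cannot reach $\int_0^\infty|R_i|\,\diff t\lesssim\varepsilon_1^5$; these lemmas are the analytic content that turns a single integration by parts into a usable remainder estimate, and they should be stated as part of the plan.
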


We divide the proof of Proposition \ref{pr:3} into the following four steps.
\bigskip

\noindent{\emph{Step 1: Estimate of \(M\)}}. This is the main case. By change of variables
\begin{align}\label{24.5}
(\xi,\eta,\sigma,\eta_1,\sigma_1)=2^j(\xi^\prime,\eta^\prime,\sigma^\prime,\eta_1^\prime,\sigma_1^\prime),
\end{align} 
we rewrite the term \(M\) into the following form
\begin{equation}\label{25}
\begin{aligned}
M&=-\mathrm{i}(4\pi^2)^{-1}\xi\int_{\R^4}e^{-\mathrm{i}t\Psi(\xi,\eta,\sigma,\eta_1,\sigma_1)}\widehat{f_{\lesssim j}}(t,\xi-\eta-\sigma-\eta_1-\sigma_1)\widehat{f_{\lesssim j}}(t,\eta)\widehat{f_{\lesssim j}}(t,\sigma)\\
&\quad\times\widehat{f_{\lesssim j}}(t,\eta_1)\widehat{f_{\lesssim j}}(t,\sigma_1)\varphi_j(C\eta)\varphi_j(C\sigma)\varphi_j(C\eta_1)
\varphi_j(C\sigma_1)\,\diff\eta\diff\sigma\diff\eta_1\diff\sigma_1\\
&=-\mathrm{i}(4\pi^2)^{-1}2^{5j}\int_{\R^4}e^{-\mathrm{i}t2^{5j}\Psi(\xi^\prime,\eta^\prime,\sigma^\prime,\eta_1^\prime,\sigma_1^\prime)}\xi^\prime\widehat{f_{\lesssim j}}\left(t,2^j(\xi^\prime-\eta^\prime-\sigma^\prime-\eta_1^\prime-\sigma_1^\prime)\right)\\
&\quad\times\widehat{f_{\lesssim j}}(t,2^j\eta^\prime)\widehat{f_{\lesssim j}}(t,2^j\sigma^\prime)
\widehat{f_{\lesssim j}}(t,2^j\eta_1^\prime)\widehat{f_{\lesssim j}}(t,2^j\sigma_1^\prime)
\varphi(C\eta^\prime)\varphi(C\sigma^\prime)\\
&\quad\times\varphi(C\eta_1^\prime)\varphi(C\sigma_1^\prime)\,\diff\eta^\prime\diff\sigma^\prime\diff\eta_1^\prime\diff\sigma_1^\prime,
\end{aligned}
\end{equation}
where \(C\sim 1\).
If we let
\begin{equation}\label{25.5}
\begin{aligned}
F(\eta^\prime,\sigma^\prime,\eta_1^\prime,\sigma_1^\prime)&=\widehat{f_{\lesssim j}}\left(t,2^j(\xi^\prime-\eta^\prime-\sigma^\prime-\eta_1^\prime-\sigma_1^\prime)\right)\widehat{f_{\lesssim j}}(t,2^j\eta^\prime)\\
&\quad\times\widehat{f_{\lesssim j}}(t,2^j\sigma^\prime)\widehat{f_{\lesssim j}}(t,2^j\eta_1^\prime)\widehat{f_{\lesssim j}}(t,2^j\sigma_1^\prime),
\end{aligned}
\end{equation}
then \eqref{25} becomes 
\begin{equation}\label{26}
\begin{aligned}
M
&=-\mathrm{i}(4\pi^2)^{-1}2^{5j}\int_{\R^4}e^{-\mathrm{i}t2^{5j}\Psi(\xi^\prime,\eta^\prime,\sigma^\prime,\eta_1^\prime,\sigma_1^\prime)}\xi^\prime F(\eta^\prime,\sigma^\prime,\eta_1^\prime,\sigma_1^\prime)\\
&\quad\times\varphi(C\eta^\prime)\varphi(C\sigma^\prime)\varphi(C\eta_1^\prime)\varphi(C\sigma_1^\prime)\,\diff\eta^\prime\diff\sigma^\prime\diff\eta_1^\prime\diff\sigma_1^\prime,
\end{aligned}
\end{equation}
where \(|\eta^\prime|,|\sigma^\prime|,|\eta_1^\prime|,|\sigma_1^\prime|\lesssim |\xi^\prime|\sim 1\). 
\bigskip

To analyze \(M\), we need the following stationary phase in 4-dimension:
\begin{lemma}\label{le:2} Assume \(\chi\in C_0^\infty\) and \(|\nabla\chi|+|\nabla^2\chi|\leq 1\); and
	\(\phi\in C^\infty\) such that \(|\det \mathrm{Hess}\ \phi|\geq1\) and 
	\(|\nabla\phi|+|\nabla^2\phi|+|\nabla^3\phi|\leq 1\). Let 
	\begin{align*}
	J=\int_{\R^4}e^{\mathrm{i}\lambda\phi(\xi_1,\xi_2,\xi_3,\xi_4)}F(\xi_1,\xi_2,\xi_3,\xi_4)\chi(\xi_1,\xi_2,\xi_3,\xi_4)\, \diff \xi_1 \diff\xi_2 \diff \xi_3 \diff\xi_4.
	\end{align*}
	\(\mathrm{(i)}\) If \(\nabla\phi\) only vanishes at \(({\xi_1}_0,{\xi_2}_0,{\xi_3}_0,{\xi_4}_0)\), then the following asymptotic expansion holds true: 
	\begin{equation*}
	\begin{aligned}
	J&=\frac{4\pi^2 e^{\mathrm{i}\frac{\pi}{4}\beta}}{\sqrt{\Delta}}\frac{e^{\mathrm{i}\lambda\phi({\xi_1}_0,{\xi_2}_0,{\xi_3}_0,{\xi_4}_0)}}{\lambda^2}F({\xi_1}_0,{\xi_2}_0,{\xi_3}_0,{\xi_4}_0)\chi({\xi_1}_0,{\xi_2}_0,{\xi_3}_0,{\xi_4}_0)\\
	&\quad+\bigO\left(\frac{\|\widehat{F}\|_{L^1}}{\lambda^2}\right),
	\end{aligned}
	\end{equation*}
	where \(\beta=\mathrm{sign}\ \mathrm{Hess}\ \phi({\xi_1}_0,{\xi_2}_0,{\xi_3}_0,{\xi_4}_0)\) and \(\Delta=|\det \mathrm{Hess}\ \phi({\xi_1}_0,{\xi_2}_0,{\xi_3}_0,{\xi_4}_0)|\).\\
	\(\mathrm{(ii)}\) If \(|\nabla\phi|\geq1\), then we have
	\begin{align*}
	J=\bigO\left(\frac{\|\widehat{F}\|_{L^1}}{\lambda^2}\right).
	\end{align*}
	
\end{lemma}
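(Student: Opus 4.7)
The strategy is to use Fourier inversion to transfer the $F$-dependence from the amplitude to the phase of the oscillatory integral, so that all smoothness is carried by the fixed cutoff $\chi$ and the error estimate naturally involves $\|\widehat F\|_{L^1}$. Writing $F(\xi) = (2\pi)^{-2}\int_{\R^4}\widehat F(y)\,e^{\mathrm{i}y\cdot\xi}\,\diff y$ and interchanging the order of integration gives
\[
J = (2\pi)^{-2}\int_{\R^4}\widehat F(y)\,K(y,\lambda)\,\diff y,\qquad K(y,\lambda):=\int_{\R^4}e^{\mathrm{i}(\lambda\phi(\xi)+y\cdot\xi)}\chi(\xi)\,\diff\xi.
\]
The problem then reduces to analyzing $K(y,\lambda)$ uniformly in $y$ and, for part (i), extracting its leading contribution.

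For part (ii) I would split the $y$-range according to the gradient $\nabla\Phi_y := \lambda\nabla\phi+y$ of the modified phase $\Phi_y := \lambda\phi+y\cdot\xi$. When $|y|\leq\lambda/2$, the hypothesis $|\nabla\phi|\geq 1$ forces $|\nabla\Phi_y|\geq\lambda/2$, and two integrations by parts (using $|\nabla^2\phi|\leq 1$) yield $|K|\lesssim\lambda^{-2}$. When $|y|\geq 2\lambda$, $|\nabla\Phi_y|\geq|y|/2$, and repeated integration by parts gives $|K|\lesssim|y|^{-N}\lesssim\lambda^{-2}$. In the intermediate band $\lambda/2\leq|y|\leq 2\lambda$ critical points of $\Phi_y$ may exist, but $\mathrm{Hess}\,\Phi_y = \lambda\,\mathrm{Hess}\,\phi$ is non-degenerate with determinant at least $\lambda^{4}$, so the classical single-function stationary phase still yields $|K|\lesssim\lambda^{-2}$. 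The bound $|J|\leq(2\pi)^{-2}\|\widehat F\|_{L^1}\sup_y|K(y,\lambda)|$ then gives part (ii).

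For part (i) I would apply the classical single-function stationary phase expansion to $K(y,\lambda)$ for each $y$. Near $y=0$ the critical point $\xi^*_y$ of $\Phi_y$ satisfies $\nabla\phi(\xi^*_y)=-y/\lambda$ and lies close to $\xi^0$ by the implicit function theorem; a Morse-type Taylor analysis around $\xi^0$ then gives
\[
K(y,\lambda) = \frac{(2\pi)^2\,e^{\mathrm{i}\pi\beta/4}\,e^{\mathrm{i}\lambda\phi(\xi^0)}\,\chi(\xi^0)}{\lambda^2\sqrt{\Delta}}\,e^{\mathrm{i}y\cdot\xi^0}\,e^{-\mathrm{i}\langle H^{-1}y,y\rangle/(2\lambda)} + E(y,\lambda),
\]
with $H = \mathrm{Hess}\,\phi(\xi^0)$ and $\sup_y|E(y,\lambda)|\lesssim\lambda^{-2}$ (for large $|y|$ the leading term is absorbed into $E$ and controlled by the three-regime bound above). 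Inserting this into $J=(2\pi)^{-2}\int\widehat F\,K\,\diff y$ and invoking Fourier inversion $(2\pi)^{-2}\int\widehat F(y)e^{\mathrm{i}y\cdot\xi^0}\,\diff y = F(\xi^0)$, the leading contribution splits as a main part (with the Fresnel factor $e^{-\mathrm{i}\langle H^{-1}y,y\rangle/(2\lambda)}$ replaced by $1$), which recovers the claimed asymptotic, plus a correction part dominated via the crude pointwise bound $|e^{\mathrm{i}\theta}-1|\leq 2$ by $\lesssim\lambda^{-2}\|\widehat F\|_{L^1}$.

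The main obstacle is precisely this last step: one must resist any Taylor expansion of the Fresnel factor $e^{-\mathrm{i}\langle H^{-1}y,y\rangle/(2\lambda)}$, since such an expansion would introduce stronger moment norms $\||y|^{k}\widehat F\|_{L^1}$ that are not available. The pointwise bound $|e^{\mathrm{i}\theta}-1|\leq 2$ is exactly what closes the argument in the natural norm $\|\widehat F\|_{L^1}$. A secondary but less delicate issue is the intermediate $|y|\sim\lambda$ regime, where the modified phase may acquire critical points and one must keep the contribution uniformly $O(\lambda^{-2})$, relying crucially on the lower bound $|\det\mathrm{Hess}\,\phi|\geq 1$.
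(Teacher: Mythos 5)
Your proposal is correct, and it is essentially a reconstruction (lifted from two to four dimensions) of the argument behind Lemma A.1 of Germain--Pusateri--Rousset, which is precisely what the paper cites as its proof of this lemma: transfer the rough amplitude $F$ to the phase via Fourier inversion, bound the resulting kernel $K(y,\lambda)$ uniformly by $\lambda^{-2}$ through a three-regime integration-by-parts/stationary-phase analysis, and in part (i) recover the leading term from $(2\pi)^{-2}\int \widehat F(y)e^{\mathrm{i}y\cdot\xi^0}\,\diff y = F(\xi^0)$ while discarding the Fresnel correction with the uniform bound $|e^{\mathrm{i}\theta}-1|\leq 2$ so that only $\|\widehat F\|_{L^1}$ appears in the remainder. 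Your explicit remark that the Fresnel factor must not be Taylor-expanded (to avoid moment norms of $\widehat F$) is the right reading of why the error is stated in $\|\widehat F\|_{L^1}$.
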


\begin{proof} The results follow from similar arguments of Lemma A.1 in \cite{MR3519470}.    
\end{proof}

A direct computation shows that the stationary points of \(\Psi\)  are given by
\begin{equation*}
\begin{aligned}
(\eta_1,\sigma_1,{\eta_1}_1,{\sigma_1}_1)&=(\xi/5,\xi/5,\xi/5,\xi/5),\\
\bigcup_{a=2}^6(\eta_a,\sigma_a,{\eta_1}_a,{\sigma_1}_a)&=\big\{(-\xi/3,\xi/3,\xi/3,\xi/3),(\xi/3,-\xi/3,\xi/3,\xi/3),\\
&\quad\quad (\xi/3,\xi/3,-\xi/3,\xi/3),(\xi/3,\xi/3,\xi/3,-\xi/3),\\
&\quad\quad (\xi/3,\xi/3,\xi/3,\xi/3)\big\},\\
\bigcup_{a=7}^{16}(\eta_a,\sigma_a,{\eta_1}_a,{\sigma_1}_a)&=\big\{(-\xi,-\xi,\xi,\xi),(-\xi,\xi,-\xi,\xi),(-\xi,\xi,\xi,-\xi),\\
&\quad\quad (\xi,-\xi,-\xi,\xi),
(\xi,-\xi,\xi,-\xi),(\xi,\xi,-\xi,-\xi),\\
&\quad\quad (\xi,\xi,\xi,-\xi),(\xi,\xi,-\xi,\xi),(\xi,-\xi,\xi,\xi),
(-\xi,\xi,\xi,\xi)\big\}.
\end{aligned}
\end{equation*}
Moreover, we have 
\begin{equation}\label{28}
\begin{aligned}
&\Psi(\eta_1,\sigma_1,{\eta_1}_1,{\sigma_1}_1)=(624/625)\xi^5,\quad
\Delta_1=5^{-7}\cdot 4^4|\xi|^{12},\\
&\Psi(\eta_a,\sigma_a,{\eta_1}_a,{\sigma_1}_a)=(80/81)\xi^5,\quad
\Delta_a=3^{-11}\cdot 4^4\cdot 5^4|\xi|^{12},\quad 2\leq a\leq 6,\\
&\Psi(\eta_a,\sigma_a,{\eta_1}_a,{\sigma_1}_a)=0,\quad
\Delta_a=4^4\cdot 5^4|\xi|^{12},\quad 7\leq a\leq 16,
\end{aligned}
\end{equation}
and
\begin{equation}\label{29}
\begin{aligned}
&\beta_a=1-\mathrm{sign}\ \xi,\quad &\text{for}\ 1\leq a\leq 6,\\
&\beta_a=0,\quad\quad &\text{for}\ 7\leq a\leq 16.
\end{aligned}
\end{equation}

Observing \eqref{28}-\eqref{29} and \eqref{24.5}, and applying Lemma \ref{le:2} (i) to \eqref{26}, we obtain 
\begin{equation}\label{30}
\begin{aligned}
M&=\frac{-\mathrm{i}}{40t^2\xi^5}|\widehat{f}(t,\xi)|^4\widehat{f}(t,\xi)+\frac{c_1\mathrm{i}}{t^2\xi^5}e^{-\frac{624\mathrm{i}t\xi^5}{625}}\widehat{f}(t,\xi/5)^5\\
&\quad+\frac{c_2\mathrm{i}}{t^2\xi^5}e^{-\frac{80\mathrm{i}t\xi^5}{81}}|\widehat{f}(t,\xi/3)|^2\widehat{f}(t,\xi/3)^3
+\underbrace{2^{5j}\bigO\left(\frac{\|\widehat{F}\|_{L^1}}{(2^{5j}t)^2}\right)}_{R_0},
\end{aligned}
\end{equation}
 where \(c_1\) and \(c_2\) are constants depending on \(\mathrm{sign}\ \xi\). The rest of this step is to show that \(R_0\) is integrable in time and satisfies 
\begin{equation}\label{31}
\begin{aligned}
\int_{2^{-5j}}^\infty |R_0|\,\diff s\lesssim \epsilon_1^5.
\end{aligned}
\end{equation}

As a prerequisite, we give the following estimate: 
\begin{lemma}\label{le:3} Suppose \(2^j> t^{-1/5}\), then we have
	\begin{equation*}
	\begin{aligned}
	\|f_{\lesssim j}\|_{L^1}
	\lesssim \epsilon_12^{\frac{j}{4}}t^{\frac{1}{20}}.
	\end{aligned}
	\end{equation*}
	
\end{lemma}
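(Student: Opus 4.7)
The plan is to derive the bound via a weighted Cauchy--Schwarz inequality in physical space, combining a Bernstein-type low-frequency $L^2$ estimate (from the $L^\infty_\xi$ control of $\widehat{f}$) with the weighted $L^2$ bound $\norm{xf}_{L^2}\lesssim\varepsilon_1 t^{1/10}$ afforded by the $X$-norm in \eqref{8}.

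For a parameter $A>0$ to be optimized at the end, I would start from
\begin{equation*}
\norm{f_{\lesssim j}}_{L^1}\leq \norm{\langle x/A\rangle^{-1}}_{L^2}\norm{\langle x/A\rangle f_{\lesssim j}}_{L^2} \lesssim A^{1/2}\norm{f_{\lesssim j}}_{L^2}+A^{-1/2}\norm{xf_{\lesssim j}}_{L^2},
\end{equation*}
thereby reducing the problem to two $L^2$ estimates.

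Since $\widehat{f_{\lesssim j}}=\varphi_j\widehat{f}$ with $\norm{\varphi_j}_{L^2}\lesssim 2^{j/2}$, Plancherel together with the a priori bound $\norm{\widehat{f}}_{L^\infty}\leq\varepsilon_1$ immediately yields $\norm{f_{\lesssim j}}_{L^2}\lesssim\varepsilon_1 2^{j/2}$. For the second factor, writing $\mathcal{F}(xf_{\lesssim j})=\mathrm{i}\varphi_j'\widehat{f}+\mathrm{i}\varphi_j\partial_\xi\widehat{f}$ and using $\norm{\varphi_j'}_{L^2}\sim 2^{-j/2}$, the commutator term contributes $\bigO(\varepsilon_1 2^{-j/2})$, while the main term is controlled by $\norm{\partial_\xi\widehat{f}}_{L^2}=\norm{xf}_{L^2}\lesssim\varepsilon_1 t^{1/10}$. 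The standing hypothesis $2^j>t^{-1/5}$ is equivalent to $2^{-j/2}<t^{1/10}$, so the commutator contribution is absorbed into the main one and $\norm{xf_{\lesssim j}}_{L^2}\lesssim\varepsilon_1 t^{1/10}$.

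Finally, choosing $A=t^{1/10}2^{-j/2}$ equalizes the two contributions and produces exactly $\norm{f_{\lesssim j}}_{L^1}\lesssim\varepsilon_1 2^{j/4} t^{1/20}$. The argument is short and routine; the only point requiring care is to retain the commutator contribution $\varphi_j'\widehat{f}$ when computing $\mathcal{F}(xf_{\lesssim j})$, and to recognize that the assumption $2^j>t^{-1/5}$ is precisely what allows this commutator to be absorbed so that the optimization delivers the stated exponents rather than a weaker hybrid bound.
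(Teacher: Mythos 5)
Your proof is correct, and it reaches the result by a slightly more direct route than the paper. Both arguments rest on the same underlying interpolation: optimizing $A^{1/2}\|g\|_{L^2}+A^{-1/2}\|xg\|_{L^2}$ over $A$ gives exactly $\|g\|_{L^1}\lesssim\|g\|_{L^2}^{1/2}\|xg\|_{L^2}^{1/2}$, which is the inequality the paper invokes in \eqref{32}. The difference is in the scope of application. You apply it once, globally, to $g=f_{\lesssim j}$: the frequency truncation at $2^j$ gives $\|f_{\lesssim j}\|_{L^2}\lesssim\varepsilon_1 2^{j/2}$, while the commutator $\varphi_j'\widehat{f}$ in $\partial_\xi\widehat{f_{\lesssim j}}$ contributes $\mathcal{O}(\varepsilon_1 2^{-j/2})$, which the hypothesis $2^j>t^{-1/5}$ lets you absorb into the weighted bound $\|\partial_\xi\widehat{f}\|_{L^2}=\|xf\|_{L^2}\lesssim\varepsilon_1 t^{1/10}$, yielding $\|xf_{\lesssim j}\|_{L^2}\lesssim\varepsilon_1 t^{1/10}$ and hence the stated estimate after optimizing $A$. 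The paper instead applies the interpolation dyadically: it proves $\|f_a\|_{L^1}\lesssim\varepsilon_1(1+2^{a/4}t^{1/20})$ on each band with $t^{-1/5}\leq 2^a\lesssim 2^j$, handles the residual very-low-frequency piece $f_{\leq -\frac15\log t}$ separately (precisely because the commutator contribution $\varepsilon_1 2^{-a/2}$ worsens as $a\to-\infty$), and then sums the bands, invoking $2^j>t^{-1/5}$ at the summation stage. Your version dispenses with the Littlewood--Paley bookkeeping and the separate low-frequency case, at the modest cost of noting up front that the single global commutator term is absorbable; the paper's band-by-band version is more modular and produces band-wise bounds along the way. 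In both cases the hypothesis $2^j>t^{-1/5}$ plays the same role — keeping the low-frequency contribution from dominating — and you identify that correctly.
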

\begin{proof}
	We first see that
	\begin{equation*}
	\begin{aligned}
	\|f_a\|_{L^2}\leq \|\psi_a\|_{L^2}\|\widehat{f}\|_{L^\infty}\lesssim \epsilon_12^{a/2},
	\end{aligned}
	\end{equation*}
	and
	\begin{equation*}
	\begin{aligned}
	\|xf_a\|_{L^2}=\|\partial \widehat{f_a}\|_{L^2}
	&\lesssim 2^{-a}\|\psi_a^\prime\|_{L^2}\|\widehat{f}\|_{L^\infty}+\|\psi_a\|_{L^\infty}\|\partial \widehat{f}\|_{L^2}\\
	&\lesssim \epsilon_1(2^{-a/2}+t^{1/10}).
	\end{aligned}
	\end{equation*}
Hence we have
	\begin{equation}\label{32}
	\begin{aligned}
	\|f_a\|_{L^1}\lesssim \|f_a\|_{L^2}^{\frac{1}{2}}\|xf_a\|_{L^2}^{\frac{1}{2}}\lesssim  \epsilon_1\left(1+2^{\frac{a}{4}}t^{\frac{1}{20}}\right).
	\end{aligned}
	\end{equation}
	We next estimate 
	\begin{equation}\label{33}
	\begin{aligned}
	\|f_{\leq-\frac{1}{5}\log t}\|_{L^2}
	\leq \|\varphi(t^{1/5}\xi)\|_{L^2}\|\widehat{f}\|_{L^\infty}\lesssim \epsilon_1 t^{-1/10},
	\end{aligned}
	\end{equation}
	and
	\begin{equation}\label{34}
	\begin{aligned}
	&\quad\|xf_{\leq-\frac{1}{5}\log t}\|_{L^2}=\|\partial_\xi[\varphi(t^{1/5}\xi)\widehat{f}(\xi)]\|_{L^2}\\
	&\leq t^{1/5}\|\varphi^\prime(t^{1/5}\xi)\|_{L^2}\|\widehat{f}\|_{L^\infty}+\|\varphi(t^{1/5}\xi)\|_{L^\infty}\|\partial\widehat{f}\|_{L^2}
	\lesssim \epsilon_1 t^{1/10}.
	\end{aligned}
	\end{equation}
It finally concludes from \eqref{32}-\eqref{34} that  
	\begin{equation*}
	\begin{aligned}
	\|f_{\lesssim j}\|_{L^1}&\leq \|f_{\leq-\frac{1}{5}\log t}\|_{L^1}+\sum_{t^{-1/5}\leq 2^a\lesssim 2^j}\|f_a\|_{L^1}\\
	&\lesssim \epsilon_1t^{-\frac{1}{10}\times\frac{1}{2}}t^{\frac{1}{10}\times\frac{1}{2}}+\epsilon_1\sum_{t^{-1/5}\leq 2^a\lesssim 2^j}\left(1+2^{\frac{a}{4}}t^{\frac{1}{20}}\right)\\
	&\lesssim \epsilon_1+\epsilon_12^{\frac{j}{4}}t^{\frac{1}{20}}
	\lesssim \epsilon_12^{\frac{j}{4}}t^{\frac{1}{20}},
	\end{aligned}
	\end{equation*}
	where we have used \(2^j> t^{-1/5}\) in the last two inequalities.

\end{proof}

From \eqref{25.5} we calculate   
\begin{equation}\label{35}
\begin{aligned}
\widehat{F}(x,y,x_1,y_1)&=\frac{2^{-5j}}{4\pi^2}\int e^{-\mathrm{i}z\xi}f_{\lesssim j}\left(t,2^{-j}(z-x)\right)f_{\lesssim j}\left(t,2^{-j}(z-x_1)\right)\\
&\quad\times f_{\lesssim j}(t,2^{-j}z)f_{\lesssim j}\left(t,2^{-j}(y-z)\right)f_{\lesssim j}\left(t,2^{-j}(y_1-z)\right)\,\diff z,
\end{aligned}
\end{equation}
which gives 
\begin{equation}\label{36}
\begin{aligned}
\|\widehat{F}\|_{L^1}\lesssim \|f_{\lesssim j}\|_{L^1}^5.
\end{aligned}
\end{equation}
Applying Lemma \ref{le:3} to \eqref{36}, one has  
\begin{equation}\label{37}
\begin{aligned}
\left|R_0\right|
\lesssim 2^{-5j}t^{-2}\|f_{\lesssim j}\|_{L^1}^5
\lesssim \epsilon_1^52^{-\frac{15}{4}j}t^{-\frac{7}{4}}.
\end{aligned}
\end{equation}
It follows that 
\begin{equation*}
\begin{aligned}
\int_{2^{-5j}}^\infty |R_0|\,\diff s
\lesssim \epsilon_1^52^{-\frac{15}{4}j}\int_{2^{-5j}}^\infty s^{-\frac{7}{4}}\,\diff s\lesssim \epsilon_1^5.
\end{aligned}
\end{equation*}
This completes the proof of \eqref{31}.

\bigskip

\noindent{\emph{Step 2: Estimate of \(R_1\)}}. There is no stationary point in this case.  
We use change variables 
\((\eta,\sigma,\eta_1,\sigma_1)=2^k(\eta^\prime,\sigma^\prime,\eta_1^\prime,\sigma_1^\prime)\)
to write 
\begin{equation*}
\begin{aligned}
R_1
&=-\sum_{\substack{2^k\gg 2^j\\ 2^k>t^{-1/5}}}\mathrm{i}(4\pi^2)^{-1} 2^{4k}\xi\int_{\R^4}e^{-\mathrm{i}t2^{5k}\Psi(2^{-k}\xi,\eta^\prime,\sigma^\prime,\eta_1^\prime,\sigma_1^\prime)}F_k(\eta^\prime,\sigma^\prime,\eta_1^\prime,\sigma_1^\prime)\\
&\quad\times\psi(\eta^\prime)\psi(\sigma^\prime)\psi(\eta_1^\prime)\psi(\sigma_1^\prime)\,\diff\eta^\prime\diff\sigma^\prime\diff\eta_1^\prime\diff\sigma_1^\prime,
\end{aligned}
\end{equation*}
where 
\begin{equation*}
\begin{aligned}
F_k(\eta^\prime,\sigma^\prime,\eta_1^\prime,\sigma_1^\prime)&=\widehat{f_{\lesssim k}}\left(t,\xi-2^k(\eta^\prime+\sigma^\prime+\eta_1^\prime+\sigma_1^\prime)\right)\widehat{f_{\lesssim k}}(t,2^k\eta^\prime)\\
&\quad\times\widehat{f_{\lesssim k}}(t,2^k\sigma^\prime)\widehat{f_{\lesssim k}}(t,2^k\eta_1^\prime)\widehat{f_{\lesssim k}}(t,2^k\sigma_1^\prime).
\end{aligned}
\end{equation*}
From Lemma \ref{le:2} (ii) it follows that
\begin{equation}\label{38}
\begin{aligned}
|R_1|\lesssim 2^j\sum_{\substack{2^k\gg 2^j\\ 2^k>t^{-1/5}}}2^{4k}\frac{\|\widehat{F_k}\|_{L^1}}{(2^{5k}t)^2}.
\end{aligned}
\end{equation}
Proceeding as \eqref{35}-\eqref{36}, the following estimate holds true:
\begin{equation}\label{39}
\begin{aligned}
\|\widehat{F_k}\|_{L^1}\lesssim \|\widehat{f_{\lesssim k}}\|_{L^1}^5.
\end{aligned}
\end{equation}
Inserting \eqref{39} into \eqref{38}, one then uses  Lemma \ref{le:3} to estimate
\begin{equation}\label{40}
\begin{aligned}
|R_1|&\lesssim 2^j\sum_{\substack{2^k\gg 2^j\\ 2^k>t^{-1/5}}}2^{-6k}t^{-2}\|\widehat{f_{\lesssim k}}\|_{L^1}^5
\lesssim \epsilon_1^52^j\sum_{\substack{2^k\gg 2^j\\ 2^k>t^{-1/5}}}2^{-\frac{19}{4}k}t^{-\frac{7}{4}}\\
&\lesssim \epsilon_1^52^j t^{-\frac{7}{4}}\max\left(2^j,t^{-1/5}\right)^{-\frac{19}{4}}.
\end{aligned}
\end{equation}
This entails that \(R_1\) is integrable in time and satisfies 
\begin{equation}\label{41}
\begin{aligned}
\int_0^\infty|R_1|\,\diff s
\lesssim \epsilon_1^5.
\end{aligned}
\end{equation}

\noindent{\emph{Step 3: Estimate of \(R_2\)}}. Recall that there are \(7\) terms in the summands in \(R_2\), we first handle the case \(2^k\gg 2^l,2^{k_1},2^{l_1},2^j\), i.e., \(\mathcal{B}=\emptyset\), and denote by \(R_{21}\) the corresponding term after summation in \(k\),  and then explain how to treat the other cases by a same manner. We are going to show 
\begin{equation}\label{42}
\begin{aligned}
\int_0^\infty|R_2|\,\diff s
\lesssim \epsilon_1^5.
\end{aligned}
\end{equation}
\\  
\noindent{\emph{Case 1: \(\mathcal{B}=\emptyset\)}}.
Notice that \(|\eta|\) is the largest variable among \(\{\xi,\eta,\sigma,\eta_1,\sigma_1\}\), one thus may rewrite
\begin{equation}\label{43}
\begin{aligned}
R_{21}&=-\sum_{\substack{2^k\gg 2^j\\ 2^k>t^{-1/5}}}\mathrm{i}(4\pi^2)^{-1}\xi\int_{\R^4}e^{-\mathrm{i}t\Psi(\xi,\eta,\sigma,\eta_1,\sigma_1)}\widehat{f_{\sim k}}(t,\xi-\eta-\sigma-\eta_1-\sigma_1)\\
&\quad\times\widehat{f_{\sim k}}(t,\eta)\widehat{f_{\ll k}}(t,\sigma)\widehat{f_{\ll k}}(t,\eta_1)\widehat{f_{\ll k}}(t,\sigma_1)\psi_k(\eta)\varphi_k(\sigma)
\varphi_k(\eta_1)\\
&\quad\times\varphi_k(\sigma_1)\,\diff\eta\diff\sigma
\diff\eta_1\diff\sigma_1.
\end{aligned}
\end{equation}

Let \(m_1,m_2,m_3,m_4\in \{0,\dots,10\}\). On the support of the resulting integrand, one calculates that
\begin{equation}\label{43.1}
\begin{aligned}
|\partial_\sigma \Psi|=5\left|(\xi-\eta-\sigma-\eta_1-\sigma_1)^4-\sigma^4\right|\sim 2^{4k},
\end{aligned}
\end{equation} 
and 
\begin{equation}\label{43.2}
\begin{aligned}
\left|\partial_\eta^{m_1}\partial_\sigma^{m_2}\partial_{\eta_1}^{m_3}\partial_{\sigma_1}^{m_4}\left((\partial_\sigma \Psi)^{-1}\right)\right|\lesssim 2^{-4k}2^{-(m_1+m_2+m_3+m_4)k}.
\end{aligned}
\end{equation}
 Using the identity
\begin{equation*}
\begin{aligned}
e^{-\mathrm{i}t\Psi(\xi,\eta,\sigma,\eta_1,\sigma_1)}=\frac{\mathrm{i}}{t\partial_\sigma\Psi}\partial_\sigma e^{-\mathrm{i}t\Psi(\xi,\eta,\sigma,\eta_1,\sigma_1)},
\end{aligned}
\end{equation*}
and integrating by parts in \(\sigma\) in \eqref{43}, we obtain
\begin{equation}\label{44}
\begin{aligned}
R_{21}=\sum_{\substack{2^k\gg 2^j\\ 2^k>t^{-1/5}}}\left(R_{k21}+R_{k22}+R_{k23}\right),
\end{aligned}
\end{equation}
where the terms under summation are given by 
\begin{equation*}
\begin{aligned}
&R_{k21}:=(4\pi^2 t)^{-1}\xi\int_{\R^4}e^{-\mathrm{i}t\Psi(\xi,\eta,\sigma,\eta_1,\sigma_1)}(\partial_\sigma \Psi)^{-1}\partial_\sigma\widehat{f_{\sim k}}(t,\xi-\eta-\sigma-\eta_1-\sigma_1)\\
&\quad\quad\quad\quad\times\widehat{f_{\sim k}}(t,\eta)\widehat{f_{\ll k}}(t,\sigma)\widehat{f_{\ll k}}(t,\eta_1)\widehat{f_{\ll k}}(t,\sigma_1)\psi_k(\eta)\varphi_k(\sigma)\\
&\quad\quad\quad\quad\times\varphi_k(\eta_1)\varphi_k(\sigma_1)\,\diff\eta\diff\sigma\diff\eta_1\diff\sigma_1,\\
&R_{k22}:=(4\pi^2 t)^{-1}\xi\int_{\R^4}e^{-\mathrm{i}t\Psi(\xi,\eta,\sigma,\eta_1,\sigma_1)}(\partial_\sigma \Psi)^{-1}\widehat{f_{\sim k}}(t,\xi-\eta-\sigma-\eta_1-\sigma_1)\\
&\quad\quad\quad\quad\times\widehat{f_{\sim k}}(t,\eta)\partial_\sigma\widehat{f_{\ll k}}(t,\sigma)\widehat{f_{\ll k}}(t,\eta_1)\widehat{f_{\ll k}}(t,\sigma_1)\psi_k(\eta)\varphi_k(\sigma)\\
&\quad\quad\quad\quad\times\varphi_k(\eta_1)\varphi_k(\sigma_1)\,\diff\eta\diff\sigma\diff\eta_1\diff\sigma_1,\\
&R_{k23}:=(4\pi^2 t)^{-1}\xi\int_{\R^4}e^{-\mathrm{i}t\Psi(\xi,\eta,\sigma,\eta_1,\sigma_1)}\partial_\sigma\left((\partial_\sigma \Psi)^{-1}\varphi_k(\sigma)\right)\\
&\quad\quad\quad\quad\times\widehat{f_{\sim k}}(t,\xi-\eta-\sigma-\eta_1-\sigma_1)\widehat{f_{\sim k}}(t,\eta)\widehat{f_{\ll k}}(t,\sigma)\widehat{f_{\ll k}}(t,\eta_1)\\
&\quad\quad\quad\quad\times\widehat{f_{\ll k}}(t,\sigma_1)\psi_k(\eta)\varphi_k(\eta_1)\varphi_k(\sigma_1)\,\diff\eta\diff\sigma\diff\eta_1\diff\sigma_1.
\end{aligned}
\end{equation*}

To control \(R_{21}\), we need the following estimate on pseudo-product operators satisfying certain strong integrability conditions:
\begin{lemma}\label{le:4} If \(m\in L^1(\R^4)\) satisfies  
	\begin{equation*}
	\begin{aligned}
	\left\|\int_{\R^4}m(\eta,\sigma,\eta_1,\sigma_1)e^{i(x\eta+y\sigma+x_1\eta_1+y_1\sigma_1)}\,\diff\eta\diff\sigma\diff\eta_1\diff\sigma_1\right\|_{L^1_{x,y,x_1,y_1}}:=\|m\|_{S^\infty}<\infty,
	\end{aligned}
	\end{equation*}
then the following estimate holds:	
\begin{equation*}
\begin{aligned}
&\bigg|\int_{\R^4} m(\eta,\sigma,\eta_1,\sigma_1)\widehat{g_1}(-\eta-\sigma-\eta_1-\sigma_1)\widehat{g_2}(\eta)\widehat{g_3}(\sigma)\widehat{g_4}(\eta_1)
\widehat{g_5}(\sigma_1)\,\diff\eta\diff\sigma\diff\eta_1\diff\sigma_1\bigg|\\
&\lesssim \|m\|_{S^\infty}\|g_1\|_{L^{p_1}}\|g_2\|_{L^{p_2}}\|g_3\|_{L^{p_3}}\|g_4\|_{L^{p_4}}\|g_5\|_{L^{p_4}},
\end{aligned}
\end{equation*}	
for any \(p_1,p_2,p_3,p_4,p_5\in[1,\infty]\) satisfying \(\sum_{a=1}^5p_a^{-1}=1\).	
\end{lemma}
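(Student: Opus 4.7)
The strategy is to represent $m$ as the Fourier transform of an $L^1$ function and then reduce the entire multilinear integral to a one-variable Hölder estimate. By the very definition of the $S^\infty$ norm, one may write
\begin{equation*}
m(\eta,\sigma,\eta_1,\sigma_1) = \int_{\R^4} K(x,y,x_1,y_1)\, e^{\mathrm{i}(x\eta+y\sigma+x_1\eta_1+y_1\sigma_1)}\, \diff x\diff y\diff x_1\diff y_1
\end{equation*}
with $\|K\|_{L^1(\R^4)} = \|m\|_{S^\infty}$, so the spectral hypothesis becomes a plain $L^1$ statement on $K$. I would insert this representation into the left-hand side of the claimed inequality and swap the order of integration, which is justified by a standard density argument with $g_j \in \Schwartz$ together with the absolute integrability provided by $K \in L^1$.

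Next, expanding each $\widehat{g_j}$ back via Fourier inversion against auxiliary spatial variables $z_1,\dots,z_5$, the four phase integrals in $\eta,\sigma,\eta_1,\sigma_1$ each produce a delta distribution, forcing the common-shift relations $z_2=z_1+x$, $z_3=z_1+y$, $z_4=z_1+x_1$, $z_5=z_1+y_1$. This is precisely where the specific argument $-\eta-\sigma-\eta_1-\sigma_1$ in $\widehat{g_1}$ is essential: it couples all five spatial variables through a single translation parameter $z:=z_1$. Collapsing the deltas leaves (up to a harmless constant $c$ from the Fourier normalization) the representation
\begin{equation*}
\begin{aligned}
&\int_{\R^4} m(\eta,\sigma,\eta_1,\sigma_1)\widehat{g_1}(-\eta-\sigma-\eta_1-\sigma_1)\widehat{g_2}(\eta)\widehat{g_3}(\sigma)\widehat{g_4}(\eta_1)\widehat{g_5}(\sigma_1)\,\diff\eta\diff\sigma\diff\eta_1\diff\sigma_1\\
&= c\int_{\R^4} K(x,y,x_1,y_1)\left(\int_\R g_1(z)g_2(z+x)g_3(z+y)g_4(z+x_1)g_5(z+y_1)\,\diff z\right)\diff x\diff y\diff x_1\diff y_1.
\end{aligned}
\end{equation*}

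At this point the conclusion drops out immediately. Hölder's inequality in the variable $z$ with exponents $p_1,\dots,p_5$ satisfying $\sum_j p_j^{-1}=1$, together with the translation invariance of $L^{p_j}$-norms, bounds the inner integral by $\prod_{j=1}^{5}\|g_j\|_{L^{p_j}}$ uniformly in $(x,y,x_1,y_1)$, and the outer integral against $|K|$ contributes the factor $\|K\|_{L^1(\R^4)} = \|m\|_{S^\infty}$. I do not anticipate any substantive obstacle here: the argument is entirely structural once the Fubini swap has been justified, and the only point requiring genuine care is recognizing that the ``frequency-sum'' argument of $\widehat{g_1}$ is exactly what collapses the four independent spatial variables of $K$ into a single translation parameter — without that coupling one would be left with separated convolutions rather than a clean 5-fold Hölder in a single integration variable.
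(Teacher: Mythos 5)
Your proof is correct and is precisely the standard Coifman--Meyer-type argument that the paper invokes by citing Lemma~A.2 of Germain--Pusateri--Rousset without reproducing it. Writing $m$ as the Fourier transform of its $L^1$ kernel $K$, collapsing the resulting delta distributions so that all five factors of $g_j$ share a single integration variable $z$ translated by the components of $K$, and then applying H\"older in $z$ followed by the triangle inequality over $K$ is exactly the intended argument.
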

\begin{proof} The same argument of Lemma A.2 in \cite{MR3519470} applies here.    
\end{proof}

Corresponding to the estimate in Lemma \ref{le:3}, we also have:    
\begin{lemma}\label{le:5} Let \(2^k> t^{-1/5}\) and \(\delta>0\), then the following estimates hold
	\begin{equation*}
	\begin{aligned}
	\|u_{\ll k}\|_{L^\infty}
	\lesssim \epsilon_12^k,\quad
	\|f_{\ll k}\|_{L^2}
	\lesssim \epsilon_12^{k/2},\quad
	\|\partial\widehat{f_{\ll k}}\|_{L^2}
	\lesssim \epsilon_1t^{\frac{1}{10}+\frac{\delta}{5}}2^{\delta k}.
	\end{aligned}
	\end{equation*}
	
\end{lemma}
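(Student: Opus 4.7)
\medskip

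\noindent\textbf{Proof proposal for Lemma \ref{le:5}.} All three bounds should follow directly from unpacking the definitions and invoking the a priori bounds in \eqref{8}, together with the elementary fact that $\widehat{f_{\ll k}}(\xi) = \varphi(c\,2^{-k}\xi)\widehat{f}(\xi)$ is supported in $|\xi|\lesssim 2^k$, so this is conceptually much simpler than Lemma \ref{le:3}: there is no need to split the frequency range at $t^{-1/5}$ because the hypothesis $2^k>t^{-1/5}$ already provides the correct amount of slack. I will treat the three estimates in order.

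For the first estimate, the plan is to use Fourier inversion. Since the linear propagator $e^{t\partial_x^5}$ is a Fourier multiplier with symbol of modulus one, $|\widehat{u_{\ll k}}(\xi)|=|\widehat{f_{\ll k}}(\xi)|$, so
\[
\|u_{\ll k}\|_{L^\infty}\lesssim \|\widehat{f_{\ll k}}\|_{L^1}\leq \|\widehat{f}\|_{L^\infty_\xi}\cdot\bigl|\operatorname{supp}\varphi_{\ll k}\bigr|\lesssim \varepsilon_1 2^k,
\]
using the third component of the $X$-norm in \eqref{8}. For the second estimate, Plancherel together with a crude $L^\infty$--$L^2$ split gives
\[
\|f_{\ll k}\|_{L^2}=\|\widehat{f_{\ll k}}\|_{L^2}\leq \|\widehat{f}\|_{L^\infty}\|\varphi_{\ll k}\|_{L^2}\lesssim \varepsilon_1 2^{k/2},
\]
again using only the $L^\infty$ bound on $\widehat{f}$.

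For the third estimate the product rule yields
\[
\partial\widehat{f_{\ll k}}(\xi) = \bigl(\partial\varphi_{\ll k}\bigr)(\xi)\widehat{f}(\xi)+\varphi_{\ll k}(\xi)\,\partial\widehat{f}(\xi).
\]
The first term is $L^2$-bounded by $\|\widehat{f}\|_{L^\infty}\|\partial\varphi_{\ll k}\|_{L^2}\lesssim \varepsilon_1 2^{-k/2}$, and the second term is controlled by the weighted $L^2$ bound in \eqref{8} via $\|\partial\widehat{f}\|_{L^2}=\|xf\|_{L^2}\lesssim \varepsilon_1 t^{1/10}$. The only (mild) subtlety is to convert $2^{-k/2}$ into $t^{1/10}$, which we do by writing, for any $\delta>0$,
\[
2^{-k/2}=2^{-k/2}\cdot(t^{1/5}2^k)^{\delta}\cdot(t^{1/5}2^k)^{-\delta}\leq t^{\delta/5}2^{\delta k}\cdot(t^{1/5}2^k)^{-\delta-1/2+1/2}\leq t^{1/10+\delta/5}2^{\delta k},
\]
where the final step uses the assumption $2^k>t^{-1/5}$, equivalently $t^{1/5}2^k>1$. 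Combining gives the claimed bound.

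The only potential obstacle is this last manipulation: the hypothesis $2^k>t^{-1/5}$ is used essentially to trade the unfavorable negative power $2^{-k/2}$ for the harmless factor $t^{1/10}$, and the free parameter $\delta>0$ in the statement absorbs whatever logarithmic or endpoint losses might otherwise appear; no further harmonic analysis or stationary phase is needed.
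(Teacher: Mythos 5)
Your proof is correct, and for the third estimate it takes a genuinely different route from the paper. The paper establishes $\|\partial\widehat{f_{\ll k}}\|_{L^2}$ by first splitting $f_{\ll k}=f_{\leq -\frac{1}{5}\log t}+\sum_{t^{-1/5}\leq 2^a\ll 2^k}f_a$, invoking the per-piece bound $\|\partial\widehat{f_a}\|_{L^2}\lesssim\varepsilon_1(2^{-a/2}+t^{1/10})$ proved in Lemma \ref{le:3}, and then summing over the $\sim\log(2^k t^{1/5})$ dyadic pieces; the resulting logarithmic loss is absorbed into the harmless factor $t^{\delta/5}2^{\delta k}$, which is precisely why the $\delta$ appears in the statement. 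You instead differentiate $\widehat{f_{\ll k}}=\varphi_{\ll k}\widehat f$ directly by the product rule and estimate the two resulting terms by $\|\widehat f\|_{L^\infty}\|\partial\varphi_{\ll k}\|_{L^2}\lesssim\varepsilon_1 2^{-k/2}$ and $\|\varphi_{\ll k}\|_{L^\infty}\|\partial\widehat f\|_{L^2}\lesssim\varepsilon_1 t^{1/10}$, so no dyadic summation and no logarithmic loss appear; since $2^k>t^{-1/5}$ gives $2^{-k/2}\leq t^{1/10}$ and $t^{\delta/5}2^{\delta k}=(t^{1/5}2^k)^\delta\geq 1$, you in fact obtain the cleaner, slightly stronger conclusion $\|\partial\widehat{f_{\ll k}}\|_{L^2}\lesssim\varepsilon_1 t^{1/10}$, which trivially implies the stated one. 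One small blemish: the displayed chain converting $2^{-k/2}$ into $t^{1/10+\delta/5}2^{\delta k}$ is garbled in the middle (the exponent bookkeeping $-\delta-1/2+1/2$ does not track what you mean, and the step implicitly uses $2^{-k/2}\leq 1$, which is false for $k<0$); the clean version is $2^{-k/2}=t^{1/10}(t^{1/5}2^k)^{-1/2}\leq t^{1/10}\leq t^{1/10+\delta/5}2^{\delta k}$. The first two estimates are proved exactly as one would expect and match the paper's (omitted) argument in spirit.
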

\begin{proof} The above three inequalities can be proven in a similar fashion, so we only show the last one:
\begin{equation*}
\begin{aligned}
\|\partial\widehat{f_{\ll k}}\|_{L^2}&\leq \|\partial \mathcal{F}(f_{<-\frac{1}{5}\log t})\|_{L^2}+\sum_{t^{-1/5}\leq 2^a\ll 2^k}\|\partial\widehat{f_a}\|_{L^2}\\
&\lesssim \epsilon_1t^{1/10}+\epsilon_1\sum_{t^{-1/5}\leq 2^a\ll 2^k}(2^{-a/2}+t^{1/10})\\
&\lesssim \epsilon_1t^{1/10}+\epsilon_1t^{1/10}\sum_{t^{-1/5}\leq 2^a\ll 2^k}1\\
&\lesssim \epsilon_1t^{1/10}+\epsilon_1t^{1/10}t^{\delta/5}\sum_{t^{-1/5}\leq 2^a\ll 2^k}2^{\delta a}\\
&\lesssim \epsilon_1t^{\frac{1}{10}+\frac{\delta}{5}}2^{\delta k},
\end{aligned}
\end{equation*}
for any \(\delta>0\).
\end{proof}

Applying Lemma \ref{le:4} and Lemma \ref{le:5}, we obtain
\begin{equation}\label{45}
\begin{aligned}
|R_{k21}|
&\lesssim t^{-1}2^j2^{-4k}\|\partial\widehat{f_{\sim k}}\|_{L^2}\|f_{\ll k}\|_{L^2}\|u_{\ll k}\|_{L^\infty}^2\|u_{\sim k}\|_{L^\infty}\\
&\lesssim \epsilon_1^52^jt^{-11/10}2^{-3k/2},
\end{aligned}
\end{equation}
\begin{equation}\label{46}
\begin{aligned}
|R_{k22}|
&\lesssim t^{-1}2^j2^{-4k}\|f_{\sim k}\|_{L^2}\|\partial\widehat{f_{\ll k}}\|_{L^2}\|u_{\ll k}\|_{L^\infty}^2\|u_{\sim k}\|_{L^\infty}\\
&\lesssim \epsilon_1^52^jt^{-\frac{11}{10}+\frac{\delta}{5}}2^{(-\frac{3}{2}+\delta) k},
\end{aligned}
\end{equation}
and
\begin{equation}\label{47}
\begin{aligned}
|R_{k23}|
&\lesssim t^{-1}2^j2^{-5k}\|f_{\sim k}\|_{L^2}\|f_{\ll k}\|_{L^2}\|u_{\ll k}\|_{L^\infty}^2\|u_{\sim k}\|_{L^\infty}\\
&\lesssim \epsilon_1^52^jt^{-6/5}2^{-2k}.
\end{aligned}
\end{equation}
Choosing \(\delta\in(0,1/2)\), substituting \eqref{45}-\eqref{47} into \eqref{44}, and summing over \(k\), we integrate in time to estimate 
\begin{equation}\label{47.5}
\begin{aligned}
\int_0^\infty |R_{21}|\,\diff s
&\lesssim \epsilon_1^52^j \int_{2^{-5k}}^\infty s^{-11/10}\max\left(2^j,s^{-1/5}\right)^{-3/2}\,\diff s\\
&\quad+ \epsilon_1^52^j \int_{2^{-5k}}^\infty s^{-\frac{11}{10}+\frac{\delta}{5}}\max\left(2^j,s^{-1/5}\right)^{-\frac{3}{2}+\delta}\,\diff s\\
&\quad+ \epsilon_1^52^j \int_{2^{-5k}}^\infty s^{-6/5}\max\left(2^j,s^{-1/5}\right)^{-2}\,\diff s\\
&\lesssim \epsilon_1^5.
\end{aligned}
\end{equation}

\bigskip
\noindent{\emph{Case 2: \(\mathcal{B}\) is a singleton}}.
 Due to symmetry (three cases totally), we only analyze the case \(2^k\sim 2^l\gg 2^{k_1},2^{l_1},2^j\), and denote by \(R_{22}\) the corresponding term after summation in \(k\), which may be written as
\begin{equation*}
\begin{aligned}
R_{22}&=-\sum_{\substack{2^k\gg 2^j\\ 2^k>t^{-1/5}}}\mathrm{i}(4\pi^2)^{-1}\xi\int_{\R^4}e^{-\mathrm{i}t\Psi(\xi,\eta,\sigma,\eta_1,\sigma_1)}\widehat{f_{\sim k}}(t,\xi-\eta-\sigma-\eta_1-\sigma_1)\\
&\quad\times\widehat{f_{\sim k}}(t,\eta)\widehat{f_{\sim k}}(t,\sigma)\widehat{f_{\ll k}}(t,\eta_1)\widehat{f_{\ll k}}(t,\sigma_1)\psi_k(\eta)\psi_k(\sigma)\\
&\quad\times\varphi_k(\eta_1)\varphi_k(\sigma_1)\,\diff\eta\diff\sigma
\diff\eta_1\diff\sigma_1.
\end{aligned}
\end{equation*}

There are two cases: \(k=l\) and \(|k-l|\geq 1\). For the former,  \eqref{43.1} and \eqref{43.2} still holds, and thus one can also apply the argument of \(R_{21}\) to deduce the desired bound.
As for the latter, we have    
\begin{equation*}
\begin{aligned}
|\partial_{\eta_1} \Psi|\sim 2^{4k},\quad \left|\partial_\eta^{m_1}\partial_\sigma^{m_2}\partial_{\eta_1}^{m_3}\partial_{\sigma_1}^{m_4}\left((\partial_{\eta_1} \Psi)^{-1}\right)\right|\lesssim 2^{-4k}2^{-(m_1+m_2+m_3+m_4)k}.
\end{aligned}
\end{equation*} 
This is enough to prove the desired estimate in a similar fashion as \(R_{21}\) after integration by parts in \(\eta_1\) instead.

\bigskip
\noindent{\emph{Case 3: \(\mathcal{B}\) is a  binary set}}.
By symmetry (three cases totally), we only also consider the case \(2^k\sim 2^l\sim 2^{k_1}\gg 2^{l_1},2^j\), and denote by \(R_{23}\) the corresponding term after summation in \(k\), which reads as
\begin{equation*}
\begin{aligned}
R_{23}&=-\sum_{\substack{2^k\gg 2^j\\ 2^k>t^{-1/5}}}\mathrm{i}(4\pi^2)^{-1}\xi\int_{\R^4}e^{-\mathrm{i}t\Psi(\xi,\eta,\sigma,\eta_1,\sigma_1)}\widehat{f_{\sim k}}(t,\xi-\eta-\sigma-\eta_1-\sigma_1)\\
&\quad\times\widehat{f_{\sim k}}(t,\eta)\widehat{f_{\sim k}}(t,\sigma)\widehat{f_{\sim k}}(t,\eta_1)\widehat{f_{\ll k}}(t,\sigma_1)\psi_k(\eta)\psi_k(\sigma)\\
&\quad\times\psi_k(\eta_1)\varphi_k(\sigma_1)\,\diff\eta\diff\sigma
\diff\eta_1\diff\sigma_1.
\end{aligned}
\end{equation*}

We shall discuss \(k=l=l_1\) and \(\max(|k-l|,|k-k_1|,|l-k_1|)\geq 1\), separately. Considering the former, it holds that
\begin{equation*}
\begin{aligned}
|\partial_{\sigma_1} \Psi|\sim 2^{4k},\quad \left|\partial_\eta^{m_1}\partial_\sigma^{m_2}\partial_{\eta_1}^{m_3}\partial_{\sigma_1}^{m_4}\left((\partial_{\sigma_1} \Psi)^{-1}\right)\right|\lesssim 2^{-4k}2^{-(m_1+m_2+m_3+m_4)k},
\end{aligned}
\end{equation*}
which suffices to yield the desired bound via integration by parts in \(\sigma_1\) as \(R_{21}\). 
For the latter, we further divide two sub cases: either there are only two numbers no less than one, or all of the three numbers are no less than one. By symmetry again,  we only consider  
\(|k-l|, |k-k_1|\geq 1, l=k_1\) for the first sub case,
and  may assume \(k>l>k_1\) for the second sub case. Both cases satisfies \eqref{43.1} and \eqref{43.2}  
which give the desired bound by repeating the argument of \(R_{21}\). 

We finish the proof of \eqref{42} by summarizing the above three cases.

\bigskip
\noindent{\emph{Step 4: Estimate of \(R_3\)}}. This is the easiest case. By Young's inequality,  we obtain
\begin{equation*}
\begin{aligned}
|R_3|&\lesssim 2^j{\bf{1}}_{t\lesssim 2^{-5j}}\sum_{\substack{2^k,2^l,2^{k_1},2^{l_1}\lesssim t^{-1/5}}}2^k2^l2^{k_1}2^{l_1}\big\|\mathcal{F}(f_{\lesssim t^{-1/5}})\big\|_{L^\infty}^5\\
&\lesssim \epsilon_1^52^jt^{-4/5}{\bf{1}}_{t\lesssim 2^{-5j}}.
\end{aligned}
\end{equation*}
Therefore \(R_3\) is integrable in time and satisfies
\begin{equation*}
\begin{aligned}
\int_0^\infty |R_3|\,\diff s
\lesssim \epsilon_1^52^j\int_0^{2^{-5j}} s^{-4/5}\,\diff s
\lesssim \epsilon_1^5.
\end{aligned}
\end{equation*}

\qed

\subsection{Proof of Proposition \ref{pr:2}}
To bound \(\|\widehat{f}(t,\cdot)\|_{L^\infty}\), we introduce the modified profile \(w\) as follows:
	\begin{equation}\label{48}
	\begin{aligned}
	\tilde{w}(t,\xi):=e^{\mathrm{i}B(t,\xi)}\widehat{f}(t,\xi)
	\end{aligned}
	\end{equation}
with
	\begin{equation}\label{49}
	\begin{aligned}
	B(t,\xi):=\frac{1}{40\xi^5} \int_1^t|\widehat{f}(s,\xi)|^4\frac{\diff s}{s^2}.
	\end{aligned}
	\end{equation}
In view of \eqref{23} and \eqref{48}-\eqref{49}, we then deduce  
	\begin{equation}\label{50}
	\begin{aligned}
	&\quad\partial_t\tilde{w}(t,\xi)=e^{\mathrm{i}B(t,\xi)}\left(\partial_t\widehat{f}(t,\xi)+\mathrm{i}\partial_tB(t,\xi)\widehat{f}(t,\xi)\right)\\
	&=e^{\mathrm{i}B(t,\xi)}\bigg[\bigg(\frac{c_1\mathrm{i}}{t^2\xi^5}e^{-\frac{624\mathrm{i}t\xi^5}{625}}\widehat{f}(t,\xi/5)^5+\frac{c_2\mathrm{i}}{t^2\xi^5}e^{-\frac{80\mathrm{i}t\xi^5}{81}}|\widehat{f}(t,\xi/3)|^2\widehat{f}(t,\xi/3)^3\bigg){\bf{1}}_{|\xi|>t^{-1/5}}\\
	&\quad\quad\quad\quad\quad\quad+R(t,\xi)\bigg],\quad \text{for}\ t>1.
	\end{aligned}
	\end{equation}
Observing that \(B\) is real, integrating in time in \eqref{50}, and taking account of \eqref{24},
we get
	\begin{equation*}
	\begin{aligned}
	|\widehat{f}(t,\xi)|&\lesssim |\widehat{u_0}(\xi)|+\left|\xi^{-5}\int_{|\xi|^{-5}}^te^{\mathrm{i}B(s,\xi)}e^{-\frac{624\mathrm{i}s\xi^5}{625}}\widehat{f}(s,\xi/5)^5s^{-2}\,\diff s\right|\\
	&\quad+\left|\xi^{-5}\int_{|\xi|^{-5}}^te^{\mathrm{i}B(s,\xi)}e^{-\frac{80\mathrm{i}s\xi^5}{81}}|\widehat{f}(t,\xi/3)|^2\widehat{f}(t,\xi/3)^3s^{-2}\,\diff s\right|+\varepsilon_1^5.
	\end{aligned}
	\end{equation*}
To complete the proof we need to show that
	\begin{equation}\label{51}
	\begin{aligned}
	&\left|\xi^{-5}\int_{|\xi|^{-5}}^te^{\mathrm{i}B(s,\xi)}e^{-\frac{624\mathrm{i}s\xi^5}{625}}\widehat{f}(s,\xi/5)^5s^{-2}\,\diff s\right|\\
	&+\left|\xi^{-5}\int_{|\xi|^{-5}}^te^{\mathrm{i}B(s,\xi)}e^{-\frac{80\mathrm{i}s\xi^5}{81}}|\widehat{f}(t,\xi/3)|^2\widehat{f}(t,\xi/3)^3s^{-2}\,\diff s\right|\lesssim \varepsilon_1^5.
	\end{aligned}
	\end{equation}

	We only show that the first part of \eqref{51} may be bounded by \(\varepsilon_1^5\), the other part can be handled in the same way.
	Observing that
	\[e^{-\frac{624\mathrm{i}s\xi^5}{625}}=-\frac{625}{624\mathrm{i}\xi^5}\partial_se^{-\frac{624\mathrm{i}s\xi^5}{625}},\] 
	and integrating by parts in \(s\), we can bound
	\begin{equation*}
	\begin{aligned}
	\left|\xi^{-5}\int_{|\xi|^{-5}}^te^{\mathrm{i}B(s,\xi)}e^{-\frac{624\mathrm{i}s\xi^5}{625}}\widehat{f}(s,\xi/5)^5s^{-2}\,\diff s\right|\lesssim E_1+E_2+E_3+E_4
	\end{aligned}
	\end{equation*}
with
\begin{equation*}
\begin{aligned}
&E_1=|\xi|^{-10}|\widehat{f}(s,\xi/5)|^5s^{-2}\big|_{s=|\xi|^{-5}}^{s=t},\\
&E_2=|\xi|^{-10}\int_{|\xi|^{-5}}^t|\partial_s\widehat{f}(s,\xi/5)||\widehat{f}(s,\xi/5)|^4s^{-2}\,\diff s,\\
&E_3=|\xi|^{-10}\int_{|\xi|^{-5}}^t|\partial_sB(s,\xi)||\widehat{f}(s,\xi/5)|^5s^{-2}\,\diff s,\\
&E_4=|\xi|^{-10}\int_{|\xi|^{-5}}^t|\widehat{f}(s,\xi/5)|^5s^{-3}\,\diff s.
\end{aligned}
\end{equation*}
Applying the a priori assumption \(\|\widehat{f}(t,\cdot)\|_{L^\infty}\leq \varepsilon_1 \), the desired bound \(\varepsilon_1^5\) for \(E_1\) follows immediately, and one may also estimate the other ones as follows:
	\begin{equation*}
	\begin{aligned}
	E_2\lesssim|\xi|^{-10}\int_{|\xi|^{-5}}^t\left(\varepsilon_1^5|\xi|^{-5}s^{-2}+R(s,\xi)\right)\varepsilon_1^4s^{-2}\,\diff s\lesssim \varepsilon_1^9,
	\end{aligned}
	\end{equation*}
	\begin{equation*}
	\begin{aligned}
	E_3\lesssim |\xi|^{-10}\int_{|\xi|^{-5}}^t\varepsilon_1^4|\xi|^{-5}s^{-2}\varepsilon_1^5s^{-2}\,\diff s\lesssim \varepsilon_1^9,
	\end{aligned}
	\end{equation*}
and
	\begin{equation*}
	\begin{aligned}
	E_4\lesssim |\xi|^{-10}\int_{|\xi|^{-5}}^t\varepsilon_1^5s^{-3}\,\diff s\lesssim \varepsilon_1^5.
	\end{aligned}
	\end{equation*}
The bounds in \(E_2\) and \(E_3\) are stronger than the desired ones since \(\varepsilon_1\in(0,1)\).

\section{Asymptotics}\label{sec:4}

In Section \ref{sec:2}-\ref{sec:3}, we have shown \eqref{9}. Choosing \(\varepsilon_1=\varepsilon_0^{1/5}\), then \eqref{9} becomes 
\begin{align}\label{52}
\|u\|_{X}\leq 2C\varepsilon_0.
\end{align}
The asymptotics \eqref{3} in decaying region is a consequence of \eqref{52},  and  \eqref{10} by taking \(g\) to be the profile \(f\). So this section  is devoted to studying the asymptotics in self-similar region and oscillatory region.

\subsection{Asymptotics in self-similar region}
We introduce the following self-similar change of variables	of the solution \(u\) to \eqref{eq:main}-\eqref{eq:initial}:
	\begin{equation}\label{52.5}
	\begin{aligned}
	v(t,x)=t^{1/5}u(t,xt^{1/5}),
	\end{aligned}
	\end{equation}
and then calculate that 				
	\begin{equation}\label{53}
	\begin{aligned}
	\partial_tv=t^{-1}\partial_x(5^{-1}xv-\partial_x^4v-v^5).
	\end{aligned}
	\end{equation}

Recalling \(\gamma=\frac{1}{5}(\frac{1}{10}-C\varepsilon_1^2)\),  we will show that as \(|x|\leq t^{4\gamma}\) the following estimates hold:
	\begin{equation}\label{54}
	\begin{aligned}
	|P_{\geq 2^{20}t^{\gamma}}v(t,x)|\lesssim \varepsilon_0 t^{-7\gamma/2},
	\end{aligned}
	\end{equation}
and
\begin{equation}\label{55}
\begin{aligned}
|\partial_tP_{\leq 2^{20}t^{\gamma}}v(t,x)|\lesssim \varepsilon_0 t^{-\frac{11}{10}+\frac{3\gamma}{2}+C\epsilon_1^2}.
\end{aligned}
\end{equation}

	To estimate \eqref{54}, we first write 
	\begin{equation*}
	\begin{aligned}
	P_{\geq 2^{20}t^{\gamma}}v(t,x)&=t^{1/5}\int_{-\infty}^\infty e^{\mathrm{i}\tilde{\Phi}(\xi)}\big(1-\varphi(\xi t^{\frac{1}{5}-\gamma}2^{-20})\big)\widehat{f}(t,\xi)\,\diff\xi,\\ 
	\tilde{\Phi}(\xi)&=\tilde{\Phi}(\xi;x,t):=x\xi t^{1/5}+t\xi^5.
	\end{aligned}
	\end{equation*}
Observing that \(t\xi^4\gg |x|t^{1/5}\) on the support of the above integral, so that
\(|\partial_\xi\tilde{\Phi}|\gtrsim t\xi^4\gtrsim t^{\frac{1}{5}+4\gamma}\), we then use integration by parts to bound
	\begin{equation}\label{56}
	\begin{aligned}
	&|P_{\geq 2^{20}t^{\gamma}}v(t,x)|\leq G_1+G_2,\\
	&G_1\lesssim t^{1/5}\int_{-\infty}^\infty \left|\partial_\xi\left[(\partial_\xi\tilde{\Phi})^{-1}\big(1-\varphi(\xi t^{\frac{1}{5}-\gamma}2^{-20})\big)\right]\widehat{f}(t,\xi)\right|\,\diff\xi,\\
	&G_2\lesssim t^{1/5}\int_{-\infty}^\infty \left|(\partial_\xi\tilde{\Phi})^{-1}\big(1-\varphi(\xi t^{\frac{1}{5}-\gamma}2^{-20})\big)\partial_\xi\widehat{f}(t,\xi)\right|\,\diff\xi.
	\end{aligned}
	\end{equation}
Using the bounds on \(\|\widehat{f}\|_{L^\infty}\) and \(\|\partial\widehat{f}\|_{L^2}\) in \eqref{52},  we respectively estimate 
\begin{equation*}
\begin{aligned}
G_1&\lesssim t^{1/5}\|\widehat{f}\|_{L^\infty}\int_{-\infty}^\infty \bigg(t^{-1}|\xi|^{-5}\big|1-\varphi(\xi t^{\frac{1}{5}-\gamma}2^{-20})\big|\\
&\quad\quad\quad\quad\quad\quad+t^{-1}\xi^{-4}\big|\varphi^\prime(\xi t^{\frac{1}{5}-\gamma}2^{-20})t^{\frac{1}{5}-\gamma}\big|\bigg)\,\diff \xi\\
&\lesssim  \varepsilon_0t^{-4\gamma},
\end{aligned}
\end{equation*}
and 
\begin{equation*}
\begin{aligned}
G_2&\lesssim t^{1/5}\|\partial\widehat{f}\|_{L^2}\left(\int_{-\infty}^\infty t^{-2}\xi^{-8}\big|1-\varphi(\xi t^{\frac{1}{5}-\gamma}2^{-20})\big|^2\,\diff \xi\right)^{1/2}\\
&\lesssim \varepsilon_0t^{-7\gamma/2}.
\end{aligned}
\end{equation*}
These two estimates together complete the proof of \eqref{54}. 
	
	We now turn to the proof of \eqref{55}. One first computes 
	\begin{equation*}
	\begin{aligned}
	&\partial_tP_{\leq 2^{20}t^{\gamma}}v(t,x)
	&=\mathcal{F}^{-1}\left(\partial_t\varphi(\xi t^{-\gamma}2^{-20})\widehat{v}(t,\xi)\right)+P_{\leq 2^{20}t^{\gamma}}\partial_tv(t,x).
	\end{aligned}
	\end{equation*}	
We rewrite the first term in the form
	\begin{equation*}
	\begin{aligned}
	\mathcal{F}^{-1}\left(\partial_t\varphi(\xi t^{-\gamma}2^{-20})\widehat{v}(t,\xi)\right)
	&=t^{1/5}\int_{-\infty}^\infty e^{\mathrm{i}\tilde{\Phi}(\xi;x,t)}(\xi t^{\frac{1}{5}-\gamma-1}2^{-20})\\
	&\quad\times\varphi^{\prime}(\xi t^{1/5-\gamma}2^{-20})\widehat{f}(t,\xi)\,\diff\xi,
	\end{aligned}
	\end{equation*}
and  then by a similar fashion to \eqref{56} estimate 
	\begin{equation}\label{56.5}
	\begin{aligned}
	\|\mathcal{F}^{-1}\left(\partial_t\varphi(\xi t^{-\gamma}2^{-20})\widehat{v}(t,\xi)\right)\|_{L^\infty}\lesssim\varepsilon_0t^{-1-\frac{7\gamma}{2}}.
	\end{aligned}
	\end{equation}
For the second term, we write 
	\begin{equation*}
	\begin{aligned}
	P_{\leq 2^{20}t^{\gamma}}\partial_tv(t,x)=\int_{-\infty}^\infty e^{\mathrm{i}x\xi}\varphi(\xi t^{-\gamma}2^{-20})(\mathrm{i}\xi)\mathcal{F}(\partial_x^{-1}\partial_tv)(t,\xi)\,\diff\xi.
	\end{aligned}
	\end{equation*}
Recalling that \(S=1+x\partial_x+5t\partial_t\), it is straightforward to show	
	\begin{equation}\label{57}
	\begin{aligned}
	\partial_tv(t,x)=t^{-1}Su(t,xt^{1/5}).
	\end{aligned}
	\end{equation}
Therefore we may estimate 
	\begin{equation}\label{58}
	\begin{aligned}
	|P_{\leq 2^{20}t^{\gamma}}\partial_tv(t,x)|&\lesssim \|\partial_x^{-1}\partial_tv\|_{L^2}\left(\int_{-\infty}^\infty |\varphi(\xi t^{-\gamma}2^{-20})\xi|^2\,\diff\xi\right)^{1/2}\\
	&\lesssim t^{3\gamma/2}\|\partial_x^{-1}\partial_tv\|_{L^2}\lesssim t^{\frac{3\gamma}{2}-1}\|ISu\|_{L^2}t^{-1/10}\\
	&\lesssim \varepsilon_0 t^{-\frac{11}{10}+\frac{3\gamma}{2}+C\epsilon_1^2}.
	\end{aligned}
	\end{equation}
The desired estimate is a consequence of \eqref{56.5} and \eqref{58}.

	We decompose \(v\) into the following form:
	\begin{equation}
	\begin{aligned}
	v(t,x)&=v(t,x)\left(1-\psi(x/t^{4\gamma})\right)+P_{\geq 2^{20}t^{\gamma}}v(t,x)\psi(x/t^{4\gamma})\\
	&\quad+P_{\leq 2^{20}t^{\gamma}}v(t,x)\psi(x/t^{4\gamma}).
	\end{aligned}
	\end{equation}
Via the definition \eqref{52.5} and the decay estimate \eqref{11}, we have 
	\begin{equation*}
	\begin{aligned}
	\left|v(t,x)\left(1-\psi(x/t^{4\gamma})\right)\right|
	\lesssim \varepsilon_0 t^{-3\gamma/2}. 
	\end{aligned}
	\end{equation*}
This together with \eqref{54}-\eqref{55} implies that \(v(t,x)\) is a Cauchy sequence in time in \(L^\infty\)-norm.
	Let 
	\begin{equation}\label{59}
	\begin{aligned}
	Q(x):=\lim_{t\rightarrow\infty} v(t,x).
	\end{aligned}
	\end{equation}
Thus \eqref{6} immediately follows.
For \(|x|\leq t^{4\gamma}\), from \eqref{54}, \eqref{55} and \eqref{59}, it follows that
	\begin{equation*}
	\begin{aligned}
	|v(t,x)-Q(x)|&\lesssim \varepsilon_0 t^{-7\gamma/2}+\varepsilon_0\int_t^\infty s^{-\frac{11}{10}+\frac{3\gamma}{2}+C\epsilon_1^2}\,\diff s\\
	&\lesssim \varepsilon_0 t^{-7\gamma/2},
	\end{aligned}
	\end{equation*}
which implies \eqref{4}.

It remains to verify that \(Q\) satisfies the ODE \eqref{5}. In view of \eqref{53} and \eqref{57}, we have  
	\begin{equation}\label{60}
	\begin{aligned}
	\|\partial_x^4v-5^{-1}xv+v^5\|_{L^2}=t^{-1/10}\|ISu\|_{L^2}\lesssim \varepsilon_0 t^{-\frac{1}{10}+C\epsilon_1^2}.
	\end{aligned}
	\end{equation}
This together with \eqref{59} completes the proof of \eqref{5}.

\subsection{Asymptotics in oscillatory region}
	This subsection is devoted to determining the leading order asymptotic term for the solution \(u\) of \eqref{eq:main}-\eqref{eq:initial} in the oscillatory region which can be stated precisely as follows:
	\begin{lemma}\label{le:6} Assume \(t\gg 1\), \(|\xi|\geq 2^{-10}t^{-\frac{1}{5}+\gamma}\) and \(\delta\in(0,1/2)\). Let  \(w\) be the modified profile defined by \eqref{48}-\eqref{49},
then there exists \(w_\infty\in L^\infty\) such that
		\begin{equation}\label{61}
		\begin{aligned}
		|\tilde{w}(t,\xi)-w_\infty(\xi)|\lesssim \varepsilon_0(|\xi|t^{1/5})^{-(\frac{1}{2}-\delta)},
		\end{aligned}
		\end{equation}
Moreover, there exits \(f_\infty\in L^\infty\) such that
		\begin{equation}\label{62}
		\begin{aligned}
		\left|\widehat{f}(t,\xi)- \exp\left(\frac{\mathrm{i}}{40t\xi^5}|f_\infty(\xi_0)|^4\right)f_\infty(\xi)\right|
		\lesssim \varepsilon_0 (|\xi|t^{1/5})^{-(\frac{1}{2}-\delta)}.
		\end{aligned}
		\end{equation}
		
	\end{lemma}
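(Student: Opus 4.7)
My approach is to read the asymptotic behavior directly off the modified ODE \eqref{50}: one expects $\tilde w(t,\xi)$ to have a limit in $t$ because, up to a non-resonant oscillatory part that can be gained back by integration by parts in time, the only obstruction is the integrable remainder $R(t,\xi)$. So first I would fix $\xi$ with $|\xi|\geq 2^{-10}t^{-1/5+\gamma}$, pick $T$ large, and write
\[
\tilde w(t,\xi)-\tilde w(T,\xi)=\int_T^t\partial_s\tilde w(s,\xi)\,\diff s;
\]
the goal is to show that this Cauchy-type difference is $O\!\big(\varepsilon_0(|\xi|T^{1/5})^{-(1/2-\delta)}\big)$ uniformly in $t>T$, which both produces the limit $w_\infty(\xi):=\lim_{t\to\infty}\tilde w(t,\xi)$ and proves \eqref{61}.

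Bounding $\partial_s\tilde w$ requires handling three pieces from \eqref{50}. For the two non-resonant oscillatory terms carrying the phases $e^{-624is\xi^5/625}$ and $e^{-80is\xi^5/81}$, I would integrate by parts in $s$ exactly as in the estimates $E_1$-$E_4$ for Proposition \ref{pr:2}; the resulting boundary and error terms are $O\!\big(\varepsilon_1^5(|\xi|s^{1/5})^{-10}\big)$ or better, far stronger than what is needed. For the remainder $R(s,\xi)$, the bound \eqref{24} alone does not track the $\xi$-decay, but each of the four pieces $R_0,R_1,R_2,R_3$ in the proof of Proposition \ref{pr:3} comes with an explicit power of $|\xi|$ once one keeps the $2^j$-factors. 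Concretely, $R_0$ and $R_1$ give $\varepsilon_1^5|\xi|^{-15/4}s^{-7/4}$, $R_3$ is supported on $s\lesssim 2^{-5j}$ and so is empty in our range, and the bottleneck $R_2$ yields $\varepsilon_1^5|\xi|^{-1/2+\delta}s^{-11/10+\delta/5}$ after summation in the dyadic index, the $\delta$-loss coming from the last inequality in Lemma \ref{le:5}. Integration on $[T,\infty)$ of this worst term produces exactly $\varepsilon_1^5(|\xi|T^{1/5})^{-(1/2-\delta)}$, which is the claimed bound.

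For the second statement \eqref{62} I would unwind $\widehat f=e^{-iB}\tilde w$ and analyse $B(t,\xi)$ at infinity. Since $|\widehat f(s,\xi)|=|\tilde w(s,\xi)|\to|w_\infty(\xi)|$ and $s^{-2}$ is integrable, the limit $B_\infty(\xi):=\lim_{t\to\infty}B(t,\xi)$ exists, and setting $f_\infty(\xi):=e^{-iB_\infty(\xi)}w_\infty(\xi)$ gives $|f_\infty|=|w_\infty|$. The elementary inequality $\big||a|^4-|b|^4\big|\lesssim \varepsilon_1^3|a-b|$ combined with \eqref{61} yields
\[
B_\infty(\xi)-B(t,\xi)=\frac{|f_\infty(\xi)|^4}{40t\xi^5}+O\!\left(\frac{\varepsilon_0\varepsilon_1^3}{|\xi|^{5+\alpha}}\,t^{-1-\alpha/5}\right),\qquad \alpha=\tfrac12-\delta,
\]
and this $O$-term is much smaller than $(|\xi|t^{1/5})^{-\alpha}$ since $|\xi|^{-5}t^{-1}\lesssim t^{-5\gamma}$ in our range. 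Splitting
\[
\widehat f(t,\xi)-e^{i|f_\infty|^4/(40t\xi^5)}f_\infty(\xi)=e^{-iB(t,\xi)}\big(\tilde w(t,\xi)-w_\infty(\xi)\big)+\Big(e^{i(B_\infty-B(t,\xi))}-e^{i|f_\infty|^4/(40t\xi^5)}\Big)f_\infty(\xi),
\]
the first term is controlled by \eqref{61} and the second by the computation of $B_\infty-B(t,\xi)$ just performed, giving \eqref{62}.

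The main obstacle, as already in Section \ref{sec:3}, is the careful joint $(\xi,t)$-book-keeping through all the sub-cases of Proposition \ref{pr:3}: one has to verify that every dyadic piece of $R$ decays at least at the borderline rate $(|\xi|s^{1/5})^{-(1/2-\delta)}$ contributed by $R_2$, since any slower decay in a single sub-case would break the whole argument. The integration by parts for the oscillatory main terms and the phase arithmetic in the second part are routine by comparison.
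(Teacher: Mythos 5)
Your argument is correct and follows the paper's strategy almost exactly: prove the Cauchy estimate for $\tilde w$ by integrating $\partial_s\tilde w$, handle the two non-resonant oscillatory terms via integration by parts as in $E_1$--$E_4$, then track the explicit $|\xi|$-powers in $R_0,R_1,R_2$ (with $R_3$ absent since $|\xi|\gg s^{-1/5}$ on the whole integration range) and identify $R_2$ as the borderline contribution giving $(|\xi|t_1^{1/5})^{-(1/2-\delta)}$. For \eqref{62}, you work directly with $B_\infty-B(t,\xi)$, whereas the paper packages the same computation through the auxiliary quantity $A(t,\xi)=B(t,\xi)+\tfrac{1}{40t\xi^5}|\widehat f(t,\xi)|^4$ and shows $A$ is Cauchy with limit $A_\infty$; these are equivalent (your $B_\infty$ is the paper's $A_\infty$), so this is a purely cosmetic difference.
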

\bigskip
We postpone proving Lemma \ref{le:6}, and first show how to use Lemma \ref{le:6} to complete the proof of \eqref{7}.
	\begin{proof}[Proof of \eqref{7}] In the oscillatory region, we first see that 
		\[\xi_0=\sqrt[4]{-x/(5t)}=5^{-1/4}t^{-1/5}(-x/t^{1/5})^{1/4}\geq 5^{-1/4}t^{-\frac{1}{5}+\gamma}.\] 
Recall that \(u(t)=e^{t\partial_x^5}f(t)\),
we then insert \eqref{62} with \(\xi=\xi_0\) into \eqref{12} with \(g=f\) to obtain 
	\begin{equation*}
	\begin{aligned}
	&\left|u(x,t)-(5t\xi_0^3)^{-1/2}\Re\left\{ \exp\left(-4\mathrm{i}t\xi_0^5+\frac{\mathrm{i}\pi}{4}+\frac{1}{40t\xi^5}|f_\infty(\xi_0)|^4\right)f_\infty(\xi_0)\right\}\right|\\
	&\lesssim  \varepsilon_0 (t\xi_0^3)^{-1/2}(t^{1/5}\xi_0)^{-(\frac{1}{2}-\delta)}+\varepsilon_0t^{-1/5}(-x/t^{1/5})^{-9/20}\\
	&\lesssim  \varepsilon_0t^{-1/5}(-x/t^{1/5})^{-\frac{1}{2}+\frac{\delta}{4}}+\varepsilon_0t^{-1/5}(-x/t^{1/5})^{-9/20}\\
	&\lesssim  \varepsilon_0t^{-1/5}(-x/t^{1/5})^{-9/20},
	\end{aligned}
	\end{equation*}
where we have chosen \(\delta>0\) sufficiently small in the last inequality. 
This completes the proof of \eqref{7}.
\end{proof}

We are now going to prove Lemma \ref{le:6}.
	\begin{proof}[Proof of Lemma \ref{le:6}] Given \(t_2\geq t_1\gg 1\) and \(|\xi|\in (2^j,2^{j+1})\) with \(j\in\Z\) such that \(2^j\geq t_1^{-1/5+\gamma}\). For \eqref{61}, we only need to show 
		\begin{equation}\label{62.5}
		\begin{aligned}
		|\tilde{w}(t_1,\xi)-\tilde{w}(t_2,\xi)|\lesssim \varepsilon_0(2^jt_1^{1/5})^{-(\frac{1}{2}-\delta)}.
		\end{aligned}
		\end{equation}
Go back to the decomposition \eqref{22}, and notice that \(|\xi|\geq 2^{-10}t^{-\frac{1}{5}+\gamma}\gg t^{-1/5}\), we then utilize Proposition \ref{pr:3} to obtain 
	\begin{equation*}
	\begin{aligned}
	\partial_t\widehat{f}(t,\xi)&=\frac{-\mathrm{i}}{40t^2\xi^5}|\widehat{f}(t,\xi)|^4\widehat{f}(t,\xi)+\frac{c_1\mathrm{i}}{t^2\xi^5}e^{-\frac{624\mathrm{i}t\xi^5}{625}}\widehat{f}(t,\xi/5)^5\\
	&\quad+\frac{c_2\mathrm{i}}{t^2\xi^5}e^{-\frac{80\mathrm{i}t\xi^5}{81}}|\widehat{f}(t,\xi/3)|^2\widehat{f}(t,\xi/3)^3+\tilde{R}(t,\xi),
	\end{aligned}
	\end{equation*}
where \(\tilde{R}(t,\xi)\) may include \(R_0(t,\xi), R_1(t,\xi)\) and \(R_2(t,\xi)\), but not \(R_3(t,\xi)\).

Let \(t\in [t_1,t_2]\).	Similar to \eqref{50}, we have  	
\begin{equation*}
\begin{aligned}
\partial_t\tilde{w}(t,\xi)
&=e^{\mathrm{i}B(t,\xi)}\bigg(\frac{c_1\mathrm{i}}{t^2\xi^5}e^{-\frac{624\mathrm{i}t\xi^5}{625}}\widehat{f}(t,\xi/5)^5
+\frac{c_2\mathrm{i}}{t^2\xi^5}e^{-\frac{80\mathrm{i}t\xi^5}{81}}|\widehat{f}(t,\xi/3)|^2\widehat{f}(t,\xi/3)^3\\
&\quad\quad\quad\quad\quad+\tilde{R}(t,\xi)\bigg).
\end{aligned}
\end{equation*}	
To show \eqref{62.5},  it is enough to estimate
		\begin{equation}\label{63}
		\begin{aligned}
		&\left|\xi^{-5}\int_{t_1}^{t_2}e^{\mathrm{i}B(s,\xi)}e^{-\frac{624\mathrm{i}s\xi^5}{625}}\widehat{f}(s,\xi/5)^5s^{-2}\,\diff s\right|\\
		&+\left|\xi^{-5}\int_{t_1}^{t_2}e^{\mathrm{i}B(s,\xi)}e^{-\frac{80\mathrm{i}s\xi^5}{81}}|\widehat{f}(t,\xi/3)|^2\widehat{f}(t,\xi/3)^3s^{-2}\,\diff s\right|\lesssim \varepsilon_0(2^jt_1^{1/5})^{-(\frac{1}{2}-\delta)},
		\end{aligned}
		\end{equation}
and 
		\begin{equation}\label{64}
		\begin{aligned}
		\int_{t_1}^{t_2}|\tilde{R}(s,\xi)|\,\diff s \lesssim \varepsilon_0(2^jt_1^{1/5})^{-(\frac{1}{2}-\delta)}.
		\end{aligned}
		\end{equation}
	
For \eqref{63}, due to similarity, we only show the second term can be controlled by the desired bound. 
We integrate by parts in \(s\) to deduce
		\begin{equation*}
		\begin{aligned}
		\left|\xi^{-5}\int_{t_1}^{t_2}e^{\mathrm{i}B(s,\xi)}e^{-\frac{80\mathrm{i}s\xi^5}{81}}|\widehat{f}(t,\xi/3)|^2\widehat{f}(t,\xi/3)^3s^{-2}\,\diff s\right|\lesssim \sum_{n=1}^4H_n,
		\end{aligned}
		\end{equation*}
where \(H_n\) under summation are given by 
		\begin{equation*}
		\begin{aligned}
		&H_1=|\xi|^{-10}|\widehat{f}(s,\xi/3)|^5s^{-2}\big|_{s=t_1}^{s=t_2},\\
		&H_2=|\xi|^{-10}\int_{t_1}^{t_2}|\partial_s\widehat{f}(s,\xi/3)||\widehat{f}(s,\xi/3)|^4s^{-2}\,\diff s,\\
		&H_3=|\xi|^{-10}\int_{t_1}^{t_2}|\partial_sB(s,\xi)||\widehat{f}(s,\xi/3)|^5s^{-2}\,\diff s,\\
		&H_4=|\xi|^{-10}\int_{t_1}^{t_2}|\widehat{f}(s,\xi/3)|^5s^{-3}\,\diff s.
		\end{aligned}
		\end{equation*}
Using the bound on \(\|\widehat{f}\|_{L^\infty}\) in \eqref{52}, we can estimate
\begin{equation*}
\begin{aligned}
H_1\lesssim \varepsilon_0^5(2^jt_1^{1/5})^{-10},
\end{aligned}
\end{equation*}
and
		\begin{equation*}
		\begin{aligned}
		H_2&\lesssim|\xi|^{-10}\int_{t_1}^{t_2}\left(\varepsilon_0^5|\xi|^{-5}s^{-2}+R(s,\xi)\right)\varepsilon_0^4s^{-2}\,\diff s\\
		&\lesssim\varepsilon_0^42^{-10j}t_1^{-2}\left(\varepsilon_0^5+\int_{t_1}^{t_2}R(s,\xi)\,\diff s\right)\lesssim \varepsilon_0^9(2^jt_1^{1/5})^{-10}.
		\end{aligned}
		\end{equation*}
Similarly, one can also obtain the bound \(\varepsilon_0^9(2^jt_1^{1/5})^{-15}\) and \(\varepsilon_0^5(2^jt_1^{1/5})^{-10}\) for \(H_3\) and \(H_4\), respectively. 
The above bounds are much stronger than the desired ones since \(2^{j}t_1^{1/5}\gg 1\) and \(\varepsilon_0\in(0,1)\).

		We turn to prove \eqref{64}. Here it should be clear that we now use the bounds in \eqref{52} instead of \eqref{8}, so the small coefficient \(\epsilon_1\) of the estimates  in Lemma \ref{le:3} and Lemma \ref{le:5} should be replaced by \(\epsilon_0\). Go back to \eqref{37}, we then estimate
		\begin{equation*}
		\begin{aligned}
		\int_{t_1}^{t_2} |R_0|\,\diff s
		\lesssim \epsilon_0^52^{-\frac{15}{4}j}\int_{t_1}^{t_2} s^{-\frac{7}{4}}\,\diff s\lesssim \epsilon_0^5(2^jt_1^{1/5})^{-\frac{15}{4}}.
		\end{aligned}
		\end{equation*}
Recalling \eqref{40}, we have 
		\begin{equation*}
		\begin{aligned}
		\int_{t_1}^{t_2}|R_1|\,\diff s\lesssim \epsilon_0^52^j\int_{t_1}^{t_2}  s^{-\frac{7}{4}}\max\left(2^j,s^{-1/5}\right)^{-\frac{19}{4}}\,\diff s
		\lesssim \epsilon_0^5(2^jt_1^{1/5})^{-\frac{15}{4}},
		\end{aligned}
		\end{equation*}
where we have used the fact \(2^{j}t_1^{1/5}\gg 1\) in the last inequality.	Theses two estimates are more sufficient for the desired bound in \eqref{64}. It remains to bound \(R_2\), we only consider \(R_{21}\) since other cases can be analyzed like {\emph{Step 3}} in the proof of Proposition \ref{pr:3}. In view of \eqref{47.5}, using \(2^{j}t_1^{1/5}\gg 1\) again, we have 	
	\begin{equation*}
	\begin{aligned}
	\int_{t_1}^{t_2} |R_{21}|\,\diff s
	&\lesssim \epsilon_0^52^j \int_{t_1}^{t_2} s^{-11/10}2^{-3j/2}\,\diff s
	+ \epsilon_0^52^j \int_{t_1}^{t_2} s^{-\frac{11}{10}+\frac{\delta}{5}}2^{(-\frac{3}{2}+\delta)j}\,\diff s\\
	&\quad+ \epsilon_0^52^j \int_{t_1}^{t_2} s^{-6/5}2^{-2j}\,\diff s
	\lesssim \epsilon_0^5(2^jt_1^{1/5})^{-(\frac{1}{2}-\delta)}.
	\end{aligned}
	\end{equation*}

	We now come to the proof of \eqref{62}. 	Since \(B\) is real, it follows from \eqref{61} that  
		\begin{equation}\label{65}
		\begin{aligned}
		\big||\widehat{f}(t,\xi)|-|w_\infty(\xi)|\big|\leq \varepsilon_0(|\xi|t^{1/5})^{-(\frac{1}{2}-\delta)}.
		\end{aligned}
		\end{equation}
Let 
		\begin{equation}\label{66}
		\begin{aligned}
		A(t,\xi):=B(t,\xi)+\frac{1}{40t\xi^5}|\widehat{f}(t,\xi)|^4.
		\end{aligned}
		\end{equation}
A direct calculation shows that
		\begin{equation*}
		\begin{aligned}
		A(t_2,\xi)-A(t_1,\xi)&=\frac{1}{40\xi^5} \int_{t_1}^{t_2}\left(|\widehat{f}(s,\xi)|^4-|\widehat{f}(t_2,\xi)|^4\right)\frac{\diff s}{s^2}\\
		&\quad-\frac{1}{40t_1\xi^5}\left(|\widehat{f}(t_1,\xi)|^4-|\widehat{f}(t_2,\xi)|^4\right).
		\end{aligned}
		\end{equation*}
This together with \eqref{65} implies that \(A(t,\xi)\) is a Cauchy sequence in time. Therefore there exists \(A_\infty\in L^\infty\) such that 
		\begin{equation}\label{67}
		\begin{aligned}
		|A(t,\xi)-A_\infty(\xi)|\lesssim \varepsilon_0(|\xi|t^{1/5})^{-(\frac{1}{2}-\delta)}.
		\end{aligned}
		\end{equation}		
Combining \eqref{65}, \eqref{66} and \eqref{67}, we obtain
		\begin{equation*}
		\begin{aligned}
		\bigg|B(t,\xi)-\bigg(A_\infty(\xi)-\frac{1}{40t\xi^5}|w_\infty(\xi)|^4\bigg)\bigg|\lesssim \varepsilon_0(|\xi|t^{1/5})^{-(\frac{1}{2}-\delta)}.
		\end{aligned}
		\end{equation*}
So this resulting estimate together with \eqref{48}-\eqref{49} yields 
		\begin{equation*}
		\begin{aligned}
		\bigg|\widehat{f}(t,\xi)-w_\infty(\xi)\exp\bigg(-\mathrm{i}A_\infty(\xi)+\frac{\mathrm{i}}{40t\xi^5}|w_\infty(\xi)|^4\bigg)\bigg|\lesssim \varepsilon_0(|\xi|t^{1/5})^{-(\frac{1}{2}-\delta)}.
		\end{aligned}
		\end{equation*}
We finally define \(f_\infty(\xi):=w_\infty(\xi)\exp\big(-\mathrm{i}A_\infty(\xi)\big)\) to conclude \eqref{62}. 
		
	\end{proof}

\section*{Acknowledgments}
 The author is grateful to Jean-Claude Saut for many helpful suggestions and acknowledges the support of the ANR project ANuI. The author would also like to thank Benjamin Harrop-Griffiths and Mamoru Okamoto for kindly sharing their expertises on the testing by wave packets in their works.

\end{document}